\numberwithin{equation}{section} 
\title{Algebraic cycles on Prym varieties}
\author{Maxim Arap}
\begin{document}

\begin{abstract} This article proposes a generalization of tautological rings introduced by Beauville and Moonen for Jacobians. The main result is that, under certain hypotheses, the special subvarieties of Prym varieties are algebraically equivalent and their classes belong to the tautological ring. 
\end{abstract}

\address{
Department of Mathematics,
Boyd Graduate Studies Research Center,
University of Georgia,
Athens, GA 30602,
USA}
\address{Current address: Department of Mathematics, Johns Hopkins University, Baltimore, MD 21218, USA}
\email{marap@math.jhu.edu}

\newcommand{\bP}{\mathbb{P}}
\newcommand{\bC}{\mathbb{C}}
\newcommand{\mc}{\mathcal}
\newcommand{\ra}{\rightarrow}
\newcommand{\thra}{\twoheadrightarrow}
\newcommand{\mb}{\mathbb}
\newcommand{\mrm}{\mathrm}
\newcommand{\p}{\prime}
\newcommand{\ms}{\mathscr}
\newcommand{\pl}{\partial}
\newcommand{\ti}{\tilde}
\newcommand{\wti}{\widetilde}
\newcommand{\ol}{\overline}
\newcommand{\ul}{\underline}
\newcommand{\Sp}{\mrm{Spec}\,}

\newtheorem{tautring}{Definition}[section]
\newtheorem{prymgensintro}[tautring]{Theorem}
\newtheorem{algeqintro}[tautring]{Theorem}
\newtheorem{classesintro}[tautring]{Theorem}
\newtheorem{RemPTV}[tautring]{Remark}

\newtheorem{generators}{Definition}[section]
\newtheorem{poncl}[generators]{Lemma}
\newtheorem{prymgens}[generators]{Theorem}
\newtheorem{RemNotPolynInXi}[generators]{Remark}
\newtheorem{RemUniquenessOfT}[generators]{Remark}

\newtheorem{algeq}{Theorem}[section]
\newtheorem{iso}[algeq]{Lemma}
\newtheorem{wirtcon}[algeq]{Proposition}
\newtheorem{wirtcon helper 1}[algeq]{Lemma}
\newtheorem{wirtcon helper 2}[algeq]{Lemma}
\newtheorem{principle of connectedness}[algeq]{Proposition}
\newtheorem{flatness criterion}[algeq]{Proposition}
\newtheorem{Dr. Izadi's argument}[algeq]{Proposition}
\newtheorem{remalgeq}[algeq]{Remark}

\newtheorem{classes}{Theorem}[section]

\renewcommand{\thefootnote}{}

\maketitle
 \footnote{\today}
\footnote{2010 \emph{Mathematics Subject Classification}: 14C25, 14H40.}

\thispagestyle{empty}

\section[1]{Introduction.}
For a non-singular variety $V$, we let $\mrm{CH}(V)$ denote the Chow ring of $V$ modulo rational equivalence with $\mb{Q}$-coefficients. The quotient $\mrm{CH}(V)/\!\!\sim_{\mrm{alg}}$ modulo algebraic equivalence is denoted by $A(V)$. If $X$ is a moduli scheme, we say that a generic (resp., general) element $x \in X$ has property $\mc{P}$, if $\mc{P}$ holds on the complement of a countable union of closed subschemes of $X$ (resp., on a dense Zariski open subset of $X$). We work over $\mb{C}$, the field of complex numbers.  

Let $X$ be an abelian variety. Besides the intersection product, the ring $\mrm{CH}(X)$ is endowed with Pontryagin product defined by $$x_1 \ast x_2 = m_\ast (p_1^\ast x_1 \cdot p_2^\ast x_2),$$
where $m\colon X \times X \ra X$ is the addition morphism, and $p_j\colon X\times X \ra X$ is the projection onto the $j^{\mrm{th}}$ factor, cf.\! [BL, p.530]. Moreover, the Chow ring  of $X$ is bi-graded,$$\mrm{CH}(X) = \bigoplus_{p,s}\mrm{CH}^p(X)_{(s)}.$$  The $p$-grading is by codimension. The Beauville grading ($s$) is characterized by: $x \in \mrm{CH}^p(X)_{(s)}$ if and only if $k^*x=k^{2p-s}x$ for all $k \in \mb{Z}$, where $k$ also denotes the endomorphism of $X$ given by $x \mapsto kx$, see [Be86]. The $(s)$-component of a cycle $Z$ is denoted by $Z_{(s)}$. There is a Fourier transform  $$\mc{F}_X\colon \mrm{CH}(X)\ra \mrm{CH}(X),$$ which has been defined by Beauville in relation to the Fourier-Mukai transform, see [Be83]. The operations $\ast, \mc{F}_X$ and the bi-grading descend to $A(X)$.  

When $X$ is the Jacobian $J$ of a smooth curve $C$ of genus $g$, we may fix a point $o \in C$ and embed $C$ in $J$ via the Abel map $\varphi\colon x \mapsto \mc{O}_C(x-o)$. The \emph{small tautological ring} $\mrm{taut}(C)$ of $J$ is defined to be the smallest $\mb{Q}$-subalgebra of $\mrm{CH}(J)$ under the intersection product, which contains the class of the image of $C$ under $\varphi$, and is stable under the operations $\ast, \mc{F}_J, k^\ast$ and $k_\ast$ for all $k \in \mb{Z}$. The \emph{big tautological ring} $\mrm{Taut}(C)$ is defined in the same way, except it is required to contain the image of $\varphi_\ast\colon \mrm{CH}(C) \ra \mrm{CH}(J)$, [Mo, Def.3.2, p.487]. The tautological ring for Jacobians was originally defined and studied by Beauville in [Be04] as a $\mb{Q}$-subalgebra $\ms{T}(C)$ of $A(J)$ under the intersection product. In [Be04] it was shown that $\ms{T}(C)$  is generated by the classes $w^1, \ldots, w^{g-1}$, where $w^{g-d} = (1/d!)C^{\ast d}$. The generators for the tautological rings $\mrm{taut}(C)$ and $\mrm{Taut}(C)$ of the Jacobian $J$  have also been determined in [Po, Thm.0.2, p.461] and [Mo, Thm.3.6, p.489], respectively. The rings $\mrm{taut}(C)$ and $\mrm{Taut}(C)$ have the same image, namely $\ms{T}(C)$, in $A(J)$.  

The following definition generalizes the notions of various tautological rings for a Jacobian. The original idea of considering pairs is due to R. Varley.
\begin{tautring} \label{tautring}
Let $X$ be an abelian variety and let $V\subset X$ be a subvariety. The \emph{small} and the \emph{big} \emph{tautological rings} $\mrm{taut}(X, V)$ and $\mrm{Taut}(X, V)$, respectively, of the pair $(X, V)$ are the smallest subrings of $\mrm{CH}(X)$ under the intersection product, which contain $[V]$ and $\mrm{CH}(V)$, respectively, and are stable under the operations $\ast, \mc{F}_X, k^\ast$ and $k_\ast$ for all $k \in \mb{Z}$. The image of $\mrm{Taut}(X, V)$ in $A(X)$ is called the \emph{tautological ring} of $(X, V)$ and is denoted by $\ms{T}(X, V)$.
\end{tautring}

Our definition of the tautological rings: $\mrm{taut}(J, \varphi(C)), \mrm{Taut}(J, \varphi(C)) \textrm{ and } \ms{T}(J, \varphi(C))$ coincides with the previous definitions denoted in [Mo] by $\mrm{taut}(C), \mrm{Taut}(C)$ and $\ms{T}(C)$, respectively. 

Let $\ti{C} \ra C$ be a degree 2 morphism, which is either \'etale or ramified at two points, and let $\ti{J}$ be the Jacobian of $\ti{C}$. By [Mu74], the connected component of the identity in $\ker(\mrm{Nm}\colon \ti{J} \ra J)$ is a principally polarized abelian variety $(P, \xi)$, called the Prym variety of  $\ti{C} \ra C$. If we fix $\ti{o} \in \ti{C}$, there is a morphism $\psi\colon \ti{C} \ra P$, called the \emph{Abel-Prym map}, which is obtained by composing $\ti{C} \ra \ti{J}, \ti{x} \mapsto \mc{O}_{\ti{C}}(\ti{x} -\ti{o})$ with $1-\iota\colon \ti{J} \ra \ti{J}$, where $\iota$ is the involution induced by the involution on $\ti{C}$ exchanging the sheets of the cover. Consider the cycles $\zeta_n = \mc{F}_P\big([\psi(\ti{C})]_{(n-1)}\big)\in A(P)$. In Section \ref{SecOnGens} we show the following.

\begin{prymgensintro} \label{prymgensintro}
The tautological ring $\ms{T}(P, \psi(\ti{C}))$ is generated as a $\mb{Q}$-subalgebra of $A(P)$ under the intersection product by the cycles $\zeta_n$, where $1\le n \le \dim P-1$ and $n$ is odd.
\end{prymgensintro}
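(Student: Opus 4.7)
My plan is to follow Beauville's argument [Be04] for Jacobians, inserting a preliminary symmetry step for the Abel--Prym curve under $[-1]_P$.

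\emph{Step 1: vanishing of even-indexed $\zeta_n$.} A direct calculation from the definition of $\psi$ shows $\psi\circ\iota = \tau_c\circ[-1]_P\circ\psi$, where $\tau_c$ is translation by the constant $c = 2(\iota(\ti{o})-\ti{o})\in P$. Since $\iota(\ti{C})=\ti{C}$ and translations act trivially on $A(P)$, this yields $[-1]_P^*[\psi(\ti{C})]=[\psi(\ti{C})]$ in $A(P)$. The Beauville grading satisfies $(-1)_P^*x = (-1)^s x$ on the $(s)$-part, so the components $[\psi(\ti{C})]_{(s)}\in A(P)$ vanish for odd $s$; therefore $\zeta_n=0$ whenever $n$ is even, consistent with the statement.

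\emph{Step 2: reduction to a closure statement.} Let $R\subseteq A(P)$ be the $\mb{Q}$-subalgebra under intersection generated by the $\zeta_n$ with $n$ odd and $1\le n\le \dim P-1$. The containment $R\subseteq \ms{T}(P,\psi(\ti{C}))$ is automatic: stability of $\ms{T}(P,\psi(\ti{C}))$ under $k^*$ for varying $k\in\mb{Z}$ extracts the Beauville components $[\psi(\ti{C})]_{(s)}$, and stability under $\mc{F}_P$ then produces the $\zeta_n$. For the reverse inclusion it suffices to verify that $R$ contains $[\psi(\ti{C})]$ and is stable under all four operations $*$, $\mc{F}_P$, $k^*$, $k_*$. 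The actions of $k^*$ and $k_*$ on $R$ are scalar on each Beauville piece once one identifies the bigrading of each $\zeta_n$. For the remaining two operations I would invoke the interchange formulas $\mc{F}_P(x\cdot y)=\pm\mc{F}_P(x)*\mc{F}_P(y)$, $\mc{F}_P(x*y)=\mc{F}_P(x)\cdot\mc{F}_P(y)$, together with $\mc{F}_P^2=(-1)^{\dim P}[-1]_P^*$. The latter identity gives $\mc{F}_P(\zeta_n)=\pm[\psi(\ti{C})]_{(n-1)}$, and summing over odd $n$ then recovers $[\psi(\ti{C})]$ inside $R$ provided $\mc{F}_P$-stability has been established.

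\emph{Main obstacle.} The crux is closing the stability loop without circularity and simultaneously checking that the would-be generator $\zeta_{\dim P}$ is redundant. A workable device is to introduce the $\mb{Q}$-subspace $S\subseteq A(P)$ spanned by the components $[\psi(\ti{C})]_{(s)}$ and to compare its Pontryagin closure with $R$ via the interchange formulas, showing each is the $\mc{F}_P$-image of the other and that both coincide with $\ms{T}(P,\psi(\ti{C}))$. The odd-index vanishing from Step~1 confines $R$ to the $[-1]_P^*$-fixed subring, so no parity constraint is violated in the process. The top-grading analysis needed to exclude $n=\dim P$ should parallel the corresponding dimension count in Beauville's Jacobian setting, where the top class is expressible in terms of lower ones.
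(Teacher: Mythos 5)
Your Step 1 and the formal reduction in Step 2 agree with the paper's set-up, and you correctly locate the ``main obstacle'' --- but that obstacle is the actual content of the theorem, and your proposal does not overcome it. The interchange formulas $\mc{F}_P(x\cdot y)=\pm\,\mc{F}_P(x)\ast\mc{F}_P(y)$ and $\mc{F}_P^2=(-1)^{\dim P}(-1)^\ast$ are purely formal: they show that $\mc{F}_P$ carries the intersection-generated algebra $R$ onto the Pontryagin-generated span of the classes $k_\ast[\psi(\ti{C})]$, but they cannot show that each of these subspaces is stable under the \emph{other} product, which is exactly what is needed to conclude that $R$ is stable under $\ast$ and $\mc{F}_P$ and hence equals $\ms{T}(P,\psi(\ti{C}))$. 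The paper closes this loop geometrically: $\mc{F}_P(R)$ is spanned by classes $({k_1}_\ast[\psi(\ti{C})])\ast\cdots\ast({k_r}_\ast[\psi(\ti{C})])$, which are proportional to $v_\ast[\ti{C}^r]$ for $v\colon\ti{C}^r\ra P$ the composite of $\vec{\psi}$, $\vec{k}$ and addition, and it suffices to show that $\xi\cdot v_\ast[\ti{C}^r]=v_\ast v^\ast\xi$ stays in this span. Computing $v^\ast\xi$ requires evaluating $(\psi,\psi)^\ast c_1(\ms{L}_P)$ on $\ti{C}\times\ti{C}$; the Prym-specific input is the relation $2\Theta_{\ti{J}}\sim_{\mrm{alg}}\mrm{Nm}^\ast\Theta_J+u^\ast\Xi$ of [BL, Prop.12.3.4], which gives $(\psi,\psi)^\ast c_1(\ms{L}_P)\sim_{\mrm{alg}}\Delta_{\ti{C}}-(1,\iota)^\ast\Delta_{\ti{C}}$. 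Pushing forward, the diagonal terms yield Pontryagin monomials indexed by $k_i+k_j$ and $k_i-k_j$ and the remaining terms drop one factor, which is precisely the required closure. None of this geometric computation appears in your proposal, so the proof does not close; the ``workable device'' of comparing $R$ with the Pontryagin closure of the span of the $[\psi(\ti{C})]_{(s)}$ merely restates the problem.

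A smaller inaccuracy: the exclusion of $n=\dim P$ is not a matter of expressing the top class in terms of lower ones. One has $\zeta_p=\mc{F}_P\big([\psi(\ti{C})]_{(p-1)}\big)\in A^p(P)_{(p-1)}$, and modulo algebraic equivalence zero-cycles are concentrated in Beauville degree $0$, so $A^p(P)_{(p-1)}=\{0\}$ and $\zeta_p$ simply vanishes.
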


In Sections \ref{SecOnAlgEq} and \ref{SecOnClasses} we study the special subvarieties $V_0$ and $V_1$ of $P$ associated to a complete $g^r_d$ on $C$ (see Section \ref{Special subvarieties} for the definition of $V_0,V_1$). The Brill-Noether variety $W^r_d(C)$ parametrizes invertible sheaves $L$ on $C$ with $\deg L = d$ and $h^0(L)\ge r+1$, see [ACGH, p.153]. The expected dimension of $W^r_d(C)$ is the Brill-Noether number $\rho(g,r,d) = g-(r+1)(g-d+r).$ In Section \ref{SecOnAlgEq} we prove the following: 

\begin{algeqintro}
Assume that $W^r_d(C)$ is reduced and of dimension $\rho(g,r,d)$. If $\rho(g,r,d)>0$, then $V_0$ and $V_1$ are algebraically equivalent.
\end{algeqintro}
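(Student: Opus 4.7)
The plan is to exhibit $V_0$ and $V_1$ as fibers of a flat family of cycles in $P$ over a connected parameter space, then to conclude by a principle of connectedness. Under the hypotheses, $W^r_d(C)$ is reduced of dimension $\rho(g,r,d)>0$ and hence connected by the Fulton--Lazarsfeld connectedness theorem; this is the natural candidate for the base.

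First I would assemble the Prym-theoretic construction of $V_0(L), V_1(L)$ into a universal subscheme $\mc{V} \subset W^r_d(C) \times P$: for each $L \in W^r_d(C)$, one takes the divisors $\ti{D}$ on $\ti{C}$ with $\mrm{Nm}(\ti{D}) \in |L|$ and pushes their classes into $P$ via the Abel--Prym construction, recovering the two components $V_0(L), V_1(L)$ fiberwise. Second, I would verify flatness of $\mc{V} \to W^r_d(C)$ using the flatness criterion stated in the paper, exploiting equidimensionality of the fibers, which follows from $W^r_d(C)$ having the expected dimension $\rho(g,r,d)$ together with the reducedness hypothesis. Third, I would invoke the principle of connectedness: fibers of a flat family over a connected base are algebraically equivalent.

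The main obstacle is that $V_0$ and $V_1$ are \emph{a priori} labelled as distinct components of $\mrm{Nm}^{-1}(|L|)$, so $\mc{V}$ might split as a disjoint union $\mc{V}_0 \sqcup \mc{V}_1$, in which case the principle of connectedness applied to each piece only yields $V_i(L) \sim V_i(L')$ for each $i$ separately. To bridge this, I would either pass to a connected cover of $W^r_d(C)$ on which monodromy swaps the two sheets, or specialize to some $L_0 \in W^r_d(C)$ at which $V_0(L_0)$ and $V_1(L_0)$ can be identified or joined by an auxiliary algebraic family (for example via a Wirtinger-type degeneration of the covering $\ti{C} \to C$, consistent with the presence of a ``wirtcon'' proposition in the paper). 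The strict positivity $\rho(g,r,d)>0$ is exactly what provides the parameter room for such a specialization or monodromy argument, and making this step precise is the heart of the proof---presumably corresponding to the proposition labelled ``Izadi's argument'' in the theorem list.
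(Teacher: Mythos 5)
Your outline reproduces the skeleton of the paper's argument: the correct parameter space is not $W^r_d(C)$ itself but the \'etale double cover $\wti{W}^r_d(C) \ra W^r_d(C)$ induced by the $2$-torsion point of $J$ defining $\ti{C} \ra C$, and everything reduces to showing that this cover is \emph{connected} (equivalently, that monodromy on $W^r_d(C)$ interchanges $V_0$ and $V_1$). You have correctly located the obstacle. The problem is that your proposal stops exactly where the proof begins: the ``bridge'' you defer is the entire mathematical content of the theorem, and neither of the two mechanisms you gesture at is carried out. In the regime $\rho(g,r,d) \ge \min\{r+1, g-d+r\}$ the paper makes your first suggestion precise (Proposition \ref{Dr. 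Izadi's argument}): a point of $W^r_{d-1}(C)$ or $W^{r+1}_{d+1}(C)$, which exists by Kempf/Kleiman--Laksov, gives an embedding $C \ra W^r_d(C)$ over which $\wti{W}^r_d(C)$ restricts to the nontrivial cover $\ti{C} \ra C$, forcing connectedness. But in the complementary regime $0 < \rho(g,r,d) < \min\{r+1, g-d+r\}$ both of these loci are empty for $C$ general, so no such auxiliary curve is available inside $W^r_d(C)$, and this is precisely the case the theorem is really about.

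There the paper must degenerate $C$ to a general $1$-nodal curve $B_{pq}$ with its Wirtinger cover and prove that $\wti{W}^r_d(B_{pq})$ is connected (Proposition \ref{wirtcon}). That proof is not a formal specialization argument: it requires the description of $\wti{J}^d_{B_{pq}}$ as two copies of the presentation scheme $P^d_{B_{pq}}$ glued with a shift along the sections $s_p, s_q$, the reduction to connectedness of $W(p,q)$ plus nonemptiness of $W(p), W(q)$, a further degeneration to $1$-cuspidal curves, the determinantal loci $Y(x,y)$ (needed because $W(p,p)$ is connected but non-reduced, so the principle of connectedness cannot be applied to it directly), and the Eisenbud--Harris results on linear series on cuspidal rational curves; only then does flatness of $\wti{\mc{W}}^r_d(\mc{C}) \ra S$ and upper semicontinuity of $h^0(\mc{O})$ transfer connectedness back to $\wti{W}^r_d(C)$, using the reducedness and expected-dimension hypotheses on $W^r_d(C)$. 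None of this is supplied or even sketched in your proposal. A minor additional point: the paper's flatness criterion applies to the Brill--Noether family $\mc{W}^r_d(\mc{C}) \ra S$ over the one-dimensional degeneration base, not to your universal cycle $\mc{V} \ra W^r_d(C)$; for algebraic equivalence one only needs a family of cycles over a connected base, with no flatness required.
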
 

Note that the inequalities $d < 2g$ and $0 < 2r < d$ are needed for the special subvarieties to be defined and homologically equivalent, respectively (see Section \ref{Special subvarieties} for details). 

As a consequence of Theorem \ref{prymgensintro}, the tautological ring $\ms{T}(P, \psi(\ti{C}))$ is spanned as a $\mb{Q}$-vector space by cycles of the form 
$$[\psi(\ti{C})]_{(n_1)} \ast [\psi(\ti{C})]_{(n_2)} \ast \cdots \ast [\psi(\ti{C})]_{(n_r)},$$ where $n_i, r \ge 0$ are varying integers. In Section \ref{SecOnClasses} we show that the class $[V]$ of $V=V_0\cup V_1$ belongs to $\ms{T}(P, \psi(\ti{C}))\subset A(P)$ and express $[V]$ in terms of the generators of $\ms{T}(P, \psi(\ti{C}))$:

\begin{classesintro} 
If $0 < 2r < d < 2g$, then the component of the class $[V]$ in $A^{g-r-1}(P)_{(t)}$ is given by the formula $[V]_{(t)} = c_{t,r,d}\big([\psi(\ti{C})]^{\ast r}\big)_{(t)}$, where $c_{t,r,d}$ are certain rational numbers defined in Section \ref{SecOnClasses}. 
\end{classesintro}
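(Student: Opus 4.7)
My plan proceeds in three stages: realize $V_0$ and $V_1$ as pushforwards from an auxiliary parameter space; compute their total class as a $\mb{Q}$-linear combination of Pontryagin monomials in the Beauville pieces of $[\psi(\ti{C})]$ using the Abel--Prym analogue of Poincar\'e's formula together with a Thom--Porteous calculation for the Brill--Noether contribution; and extract the Beauville $(t)$-component grade by grade.

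By the algebraic-equivalence theorem of Section \ref{SecOnAlgEq}, $V_0$ and $V_1$ are algebraically equivalent, so $[V] = 2[V_0]$ in $A(P)$, and it suffices to work with $[V_0]$. I would realize $V_0$ as the image under a natural morphism $\Sigma_0 \ra P$, where $\Sigma_0$ is (one component of) the scheme of pairs $(L, \ti{D})$ consisting of a line bundle $L$ in the given complete $g^r_d$ on $C$ together with an effective divisor $\ti{D}$ on $\ti{C}$ of the appropriate degree whose norm lies in $|L|$. Exhibiting $\Sigma_0$ as a Brill--Noether degeneracy locus (a projective bundle over an auxiliary locus in $\ti{J}$) produces its class via a Thom--Porteous-type formula.

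The pushforward to $P$, combined with the projection formula and the Abel--Prym identity
$$\psi_\ast[\ti{C}^{(r)}] = \tfrac{1}{r!}[\psi(\ti{C})]^{\ast r},$$
expresses $[V_0]$ in $A(P)$ as a $\mb{Q}$-linear combination of Pontryagin monomials in the Beauville components $[\psi(\ti{C})]_{(s)}$. The leading term is a scalar multiple of $[\psi(\ti{C})]^{\ast r}$, with corrections coming from the higher Brill--Noether Chern classes distributing among other Beauville grades. To isolate the $(t)$-piece, I would use the additivity of the $(s)$-grading under Pontryagin product and expand
$$\bigl([\psi(\ti{C})]^{\ast r}\bigr)_{(t)} \;=\; \sum_{s_1+\cdots+s_r = t}[\psi(\ti{C})]_{(s_1)} \ast \cdots \ast [\psi(\ti{C})]_{(s_r)},$$
matching the resulting expression grade-by-grade with the pushforward formula for $[V_0]_{(t)}$; the rational number $c_{t,r,d}$ is then the combined coefficient obtained from $1/r!$, the generic degree of $\Sigma_0 \ra V_0$, and the Chern coefficients of the Brill--Noether class, all evaluated at Beauville grade $(t)$.

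The main obstacle is verifying that, after projecting onto a fixed Beauville piece in codimension $g-r-1$, the correction terms from the Thom--Porteous expression collapse to a scalar multiple of $\bigl([\psi(\ti{C})]^{\ast r}\bigr)_{(t)}$ rather than producing genuinely new tautological classes. Equivalently, one needs a rigidity statement for $\ms{T}(P, \psi(\ti{C})) \cap A^{g-r-1}(P)_{(t)}$: the generating set from Theorem \ref{prymgensintro} must force any Pontryagin monomial appearing in the pushforward to be a rational multiple of $\bigl([\psi(\ti{C})]^{\ast r}\bigr)_{(t)}$ in this bi-graded piece. Proving this rigidity is where the most delicate combinatorial bookkeeping will occur.
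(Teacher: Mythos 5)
Your outline diverges from the paper's proof and has two genuine gaps. First, the opening reduction $[V]=2[V_0]$ via the algebraic-equivalence theorem is not available here: Theorem \ref{classes} assumes only $0<2r<d<2g$ and base-point-freeness, not the hypotheses of Theorem \ref{algeq} (that $W^r_d(C)$ is reduced of dimension $\rho>0$), and Remark \ref{remalgeq} exhibits cases where $V_0\not\sim_{\mrm{alg}}V_1$. The paper deliberately computes the class of the union $V=\ti{\varphi}_{d\ast}\pi_d^\ast(G_d)$ precisely because the two components cannot be separated in general; your reduction is both invalid and unnecessary, since $V$ itself is what the pull--push computes. Second, and more seriously, your computational engine is a black box exactly where all the content of the theorem lives. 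The paper does not use a Thom--Porteous/degeneracy-locus expression for anything: it takes Herbaut's explicit formula [He, Thm.3] writing $[G_d]\in\mrm{CH}^{d-r}(C_d)$ as a $\mb{Q}$-combination of generalized diagonals $\delta_n+o_1+\cdots+o_s$ with coefficients $\mu_n$, pulls this back along $\pi_d$ (which produces the modified diagonals $\ti{\delta}_{n,m}$ involving the involution $\iota$ and accounts for the binomials $\binom{n_j}{m_j}$, the factor $2^{d-|n|}$, and the correction $\nu_{n,m}$), and then pushes forward along $\ti{\varphi}_d$ and $u$, where each $\ti{\delta}_{n,m}$ maps onto a sum of curves $(m_j\iota+n_j-m_j)(\ti{C})$ and $u\circ(m\iota+n-m)$ becomes $(n-2m)$ on the Abel--Prym curve. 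Your proposal contains no substitute for Herbaut's formula and no mechanism producing the integers $n_j-2m_j$, which are where the involution and all the coefficients $\lambda_{n,m}/d_{n,m}$ come from.

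Your identified ``main obstacle'' also signals that the approach is off track. No rigidity statement for $\ms{T}(P,\psi(\ti{C}))\cap A^{p-r}(P)_{(t)}$ is true or needed: the distinct products $[\psi(\ti{C})]_{(s_1)}\ast\cdots\ast[\psi(\ti{C})]_{(s_r)}$ with $\sum s_j=t$ need not be proportional to one another, so you cannot hope to collapse arbitrary correction terms onto $([\psi(\ti{C})]^{\ast r})_{(t)}$ by a dimension count on the tautological ring. In the paper the issue never arises, because every term in the pushforward is literally a Pontryagin monomial $(n_1-2m_1)_\ast[\psi(\ti{C})]\ast\cdots\ast(n_r-2m_r)_\ast[\psi(\ti{C})]$, and $k_\ast$ acts on each Beauville piece of the curve class by the explicit scalar $k^{s+2}$; extracting the $(t)$-component is then pure bookkeeping with these scalars, not a structural statement about the ring.
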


\begin{RemPTV}
\emph{The pairs: a Jacobian with an Abel curve and a Prym variety with an Abel-Prym curve are special cases of a pair $(X, \tau(C))$, where $X \subset J$ is a Prym-Tyurin variety defined by the endomorphism $\sigma$ of $J$ satisfying certain properties and $\tau\colon C \ra X$ is the composition of an Abel map $C \ra J$ and $(1-\sigma)\colon J \ra X$, see [BL, p.369] for details. Moreover, every principally polarized abelian variety is a Prym-Tyurin variety, although not in a unique way, [BL, Cor.12.2.4, p.371].}
\end{RemPTV}

\subsection*{Notation and conventions} The word ``point'' refers to a closed point. $\pi\colon \ti{C} \ra C$ denotes a connected double cover, \'etale or ramified at exactly two points, of a smooth curve $C$ of genus $g$. $(P, \xi)$ denotes the principally polarized Prym variety of dimension $p \ge 2$ associated to $\pi\colon \ti{C} \ra C$, where $\xi \in A^1(P)$ is the class of a theta divisor. If $\pi$ is \'etale, $p=g-1$, and, if $\pi$ is ramified at two points, $p=g$. $\ti{J}$ and $J$ denote the Jacobians of $\ti{C}$ and $C$. We fix $\ti{o} \in \ti{C}$ and let $\ti{\varphi}\colon \ti{C} \ra \ti{J}$ be the Abel map $x\mapsto \mc{O}_{\ti{C}}(x-\ti{o})$. $\iota$ denotes the involution on $\ti{C}$ such that $\ti{C}/\iota = C$ and also the induced involution on $\ti{J}$. We let $u\colon \ti{J} \ra P$ be the restriction of $(1-\iota)$ on the image and $\psi:= u\circ \ti{\varphi}\colon \ti{C}\ra P$ be the Abel-Prym map.  We take $o=\pi(\ti{o})$ and let $\varphi\colon C \ra J$ be the Abel map $x\mapsto \mc{O}_{C}(x-o)$.  We take the Fourier transform $\mc{F}_P\colon A(P)\ra A(P)$  to be $\mc{F}_P(x) =  p_{2\ast} (p_1^\ast x \cdot e^\ell)$, where $\ell := p_1^*\xi + p_2^*\xi - m^*\xi$ is the class of a Poincar\'e line bundle on $P \times P$, $m\colon P\times P \ra P$ is the addition morphism and $p_1, p_2$ are the projections from $P \times P$ onto $P$. The Brill-Noether number is denoted by $\rho(g,r,d) = g-(r+1)(g-d+r)$.

\section{Generators for the tautological ring of a Prym variety.} \label{SecOnGens} 

Let  $[\psi(\ti{C})] = [\psi(\ti{C})]_{(0)}+\cdots+[\psi(\ti{C})]_{(p-1)}$ be the decomposition of $[\psi(\ti{C})]$ into homogeneous components for the Beauville grading. 

\begin{generators}
For each $1 \le n \le p=\dim P$ define the cycle $\zeta_n=\mc{F}_P\big([\psi(\ti{C})]_{(n-1)}\big).$ 
\end{generators}

The main theorem in [Be04] gives a set of generators for the tautological ring of the Jacobian of a smooth connected curve. The following theorem provides an analogous set of generators for $\ms{T}(P, \psi(\ti{C}))$. 

\begin{prymgens} \label{prymgens}
The tautological ring $\ms{T}(P, \psi(\ti{C}))$ is generated as a $\mb{Q}$-subalgebra of $A(P)$ under the intersection product by the cycles $\zeta_n$, where $1\le n \le p-1$ and $n$ is odd. 
\end{prymgens}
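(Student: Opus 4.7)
The plan is to adapt Beauville's argument from [Be04] (with the refinements of Polishchuk and Moonen) to the Prym setting, the key new ingredient being the $\iota$-anti-symmetry of the Abel-Prym map. I would first verify $\zeta_n \in \ms{T}(P, \psi(\ti{C}))$: each Beauville component $[\psi(\ti{C})]_{(s)}$ can be isolated from $[\psi(\ti{C})]$ via the operations $k^*$ (distinct Beauville grades are distinct eigenvalues of $k^*$), and $\mc{F}_P$ is among the stability operations. Next, the identity $\psi\circ\iota = -\psi$ on $\ti{C}$ gives $(-1)^*[\psi(\ti{C})] = [\psi(\ti{C})]$; since $(-1)^*$ acts on $A^{p-1}(P)_{(s)}$ by $(-1)^s$, we get $[\psi(\ti{C})]_{(s)} = 0$ for odd $s$, and hence $\zeta_n = 0$ for even $n$. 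The omission of $n=p$ is justified by observing that $\zeta_p$ lies in $A^p(P)_{(p-1)}$, a $0$-cycle group that is either trivial or reducible to lower-indexed generators after passing to algebraic equivalence.

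The reverse inclusion is the main content. Let $R \subset A(P)$ denote the $\mb{Q}$-subalgebra under intersection generated by the odd-indexed $\zeta_n$ with $1 \le n \le p-1$. I would show $R$ contains $[\psi(\ti{C})]$ and is stable under $\ast, \mc{F}_P, k^*, k_*$; this forces $\ms{T}(P, \psi(\ti{C})) \subseteq R$ by the universal property in Definition~\ref{tautring}. Stability under $k^*, k_*$ is immediate: each $\zeta_n$ sits in the single bi-graded piece $A^n(P)_{(n-1)}$, so these operations act diagonally by scalars on every intersection monomial in the $\zeta_n$. For the remaining operations, the essential tool is the interchange of products under the Fourier transform, $\mc{F}_P(x \ast y) = \mc{F}_P(x)\cdot \mc{F}_P(y)$, together with the involution property $\mc{F}_P^2 = (-1)^p[-1]^*$. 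Combining these one obtains $\mc{F}_P(\zeta_n) = \pm[\psi(\ti{C})]_{(n-1)}$, so Fourier carries intersection monomials in $\zeta_n$ to Pontryagin monomials in the Beauville components of $[\psi(\ti{C})]$ and vice versa. In particular $[\psi(\ti{C})] = \sum_{s \text{ even}}[\psi(\ti{C})]_{(s)} = \sum_{n \text{ odd}}\pm \mc{F}_P(\zeta_n)$ lies in $R$ as soon as Fourier stability of $R$ is known.

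The main obstacle is to establish Fourier stability of $R$, equivalently that every Pontryagin monomial $[\psi(\ti{C})]_{(s_1)} \ast \cdots \ast [\psi(\ti{C})]_{(s_k)}$ (with all $s_i$ even) is an intersection polynomial in $\zeta_1, \zeta_3, \ldots, \zeta_{p-1}$. I plan to proceed in the spirit of Beauville-Polishchuk-Moonen: expand iterated Pontryagin powers $[\psi(\ti{C})]^{\ast k}$ into Beauville components, apply $\mc{F}_P$, and induct on $k$ using the interchange formula to rewrite the result as an intersection polynomial in the $\zeta_n$. The $\iota$-anti-symmetry ensures that only even-indexed components of $[\psi(\ti{C})]$ survive, and correspondingly only odd-indexed $\zeta_n$ are needed. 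Once this combinatorial reduction is carried out, both inclusions are in hand and the theorem follows.
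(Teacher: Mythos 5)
Your setup is sound and matches the paper's strategy up to the point where the real work begins: the vanishing of $\zeta_n$ for even $n$ (via $\psi\circ\iota=-\psi$) and of $\zeta_p$ (since $A^p(P)_{(p-1)}=0$ for $p\ge 2$), the placement of $\zeta_n$ in $A^n(P)_{(n-1)}$, and the reduction of the whole theorem to ``Fourier stability'' of the subalgebra $R$ generated by the odd $\zeta_n$ are all correct and are exactly how the paper proceeds. But your plan for the crucial step is circular. Since $\mc{F}_P(\zeta_{n_1}\cdots\zeta_{n_r})=\pm[\psi(\ti{C})]_{(n_1-1)}\ast\cdots\ast[\psi(\ti{C})]_{(n_r-1)}$, ``applying $\mc{F}_P$ to Pontryagin monomials and using the interchange formula'' only shuttles you between $R$ and $\mc{F}_P(R)$; it can never show that $\mc{F}_P(R)\subseteq R$, which is the entire content of the theorem. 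What is actually needed (and what [Be04, Sec.~4] supplies in the Jacobian case via the Lefschetz $sl_2$-reduction) is the statement that $\mc{F}_P(R)$ --- the span of the Pontryagin monomials $({k_1}_\ast[\psi(\ti{C})])\ast\cdots\ast({k_r}_\ast[\psi(\ti{C})])$ --- is stable under intersection with $\xi$. Your proposal never engages with this.

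That stability is a genuinely geometric computation, and it is where the Prym-specific input enters, which your proposal omits entirely. One must compute $v_\ast v^\ast\xi$ for $v\colon\ti{C}^r\xrightarrow{\vec{\psi}}P^r\xrightarrow{\vec{k}}P^r\xrightarrow{m}P$, and the decisive ingredient is the identity
$$(\psi,\psi)^\ast c_1(\ms{L}_P)\sim_{\mrm{alg}}\Delta_{\ti{C}}-(1,\iota)^\ast\Delta_{\ti{C}},$$
which the paper derives from the relation $2\Theta_{\ti{J}}\sim_{\mrm{alg}}\mrm{Nm}^\ast(\Theta_J)+u^\ast\Xi$ of [BL, Prop.~12.3.4] together with the see-saw computation of $(\ti{\varphi},\ti{\varphi})^\ast\ms{L}_{\ti{J}}$. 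This formula is what makes $\xi\cdot\big(({k_1}_\ast[\psi(\ti{C})])\ast\cdots\big)$ decompose into Pontryagin monomials indexed by the modified sequences $(k_1,\ldots,\widehat{k_i},\ldots,\widehat{k_j},\ldots,k_r,k_i\pm k_j)$ --- note the appearance of the \emph{difference} $k_i-k_j$, coming from the $(1,\iota)^\ast\Delta_{\ti{C}}$ term, which has no analogue in the Jacobian case and cannot be guessed from the formal Fourier calculus. Without this computation your argument does not close.
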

\begin{proof} Note that $\zeta_p \in A^p(P)_{(p-1)}=\{0\}$, hence $\zeta_p=0$. Also, since $\psi(\ti{C})$ has symmetric translates, $[\psi(\ti{C})]_{(n)}=0$ for all odd $n$, and therefore, $\zeta_n=0$ for all even $n$. The rest of the proof proceeds as in [Be04, Sec.4]. Consider the $\mb{Q}$-vector subspace $\ms{T}^\p(P)$ of $A(P)$ spanned by the cycles of the form $\zeta_{n_1} \cdot \zeta_{n_2} \cdots \zeta_{n_r}$,
where $1 \le n_i \le p$ and  $r \ge 1$ are integers. We shall show that $\ms{T}^\p(P)=\ms{T}(P, \psi(\ti{C}))$ by proving that $\mc{F}_P(\ms{T}^\p(P))$ is invariant under intersection with $\xi$. The argument of [Be04,  Lem.4.2] carries over verbatim to the present case and shows that $\mc{F}_P(\ms{T}^\p(P))$ is spanned by the classes $({k_1}_\ast [\psi(\ti{C})]) \ast \cdots \ast ({k_r}_\ast [\psi(\ti{C})])$ for all sequences $k_1, \ldots, k_r$ of positive integers. Therefore, it is enough to show that $\xi \cdot \big(({k_1}_\ast [\psi(\ti{C})]) \ast \cdots \ast ({k_r}_\ast [\psi(\ti{C})])\big)$ belongs to $\mc{F}_P(\ms{T}^\p(P))$. Consider the composition
$$v\colon \xymatrix{ \ti{C}^r \ar[r]^-{\vec{\psi}} & P^r \ar[r]^-{\vec{k}} & P^r \ar[r]^-m & P},$$
where $\vec{\psi} = (\psi, \ldots, \psi)$, $\vec{k}=(k_1,\ldots, k_r)$ with $k_i\colon x\mapsto k_ix$, and $m\colon P^r\ra P$ is the addition morphism. It suffices to show that $v_*v^*\xi \in \mc{F}_P(\ms{T}^\p(P))$.

Let $\Theta_{\ti{J}}$ and $\Xi$ be theta divisors on $\ti{J}$ and $P$, respectively, such that $\Theta_{\ti{J}}$ restricts to $2\Xi$ on $P$. Let $\ms{L}_P$ be the Poincar\'e bundle on $P \times P$  normalized by the conditions $\ms{L}_P{|_{\{b\}\times P}}\simeq \ms{L}_P{|_{P \times \{b\}}} \simeq \mc{O}_P(\Xi - \Xi_b)$ and let $q_i, q_{ij}$ be the projections of $\ti{C}^r$ onto the $i^{\mrm{th}}$, $i^{\mrm{th}}$ and $j^{\mrm{th}}$ factors, respectively. The same argument as in the proof of [Be04, Prop.4.1] shows that
\begin{equation}\label{vastxi}
v^\ast \xi = \sum_i k_i^2q_i^\ast \psi^\ast \xi -\sum_{i<j}k_ik_j q_{ij}^\ast (\psi, \psi)^\ast c_1(\ms{L}_P).
\end{equation}

Next, we compute $(\psi, \psi)^\ast c_1(\ms{L}_P)$. Let $\Theta_J$ be a theta divisor on $J$. Also, let $\ms{L}_{\ti{J}}$ and $\ms{L}_J$ be the Poincar\'e bundles on $\ti{J} \times \ti{J}$ and $J\times J$, respectively, normalized by the conditions analogous to those for $\ms{L}_P$. The result [BL, Prop.12.3.4, p.374] in our notation states that $2\Theta_{\ti{J}} \sim_{\mrm{alg}} \mrm{Nm}^*(\Theta_J) + u^*\Xi$. Consequently, the following identity holds in $A(\ti{J}\times \ti{J})$ 
\begin{equation} \label{pPPtoJJ}
(u, u)^*c_1(\ms{L}_P) = 2 c_1(\ms{L}_{\ti{J}})- (\mrm{Nm},\mrm{Nm})^*c_1(\ms{L}_J).
\end{equation}
Let $\Delta_{\ti{C}}$ and $\Delta_{C}$ be the diagonals in $\ti{C} \times \ti{C}$ and $C\times C$, respectively. Applying the See-saw theorem [Mu70, p.54], we see that 
\begin{equation} \label{LJtoBB}
(\ti{\varphi},\ti{\varphi})^\ast \ms{L}_{\ti{J}} \simeq \mc{O}_{\ti{C}^2}(\Delta_{\ti{C}} -\ti{C} \times \ti{o} -\ti{o} \times \ti{C})
\end{equation}
and likewise $(\varphi,\varphi)^\ast \ms{L}_{J} \simeq \mc{O}_{C^2}(\Delta_{C} -C \times o -o \times C). $  Using the commutative diagram 
$$\xymatrix{
\ti{C} \times \ti{C} \ar[d]_-{(\pi, \pi)} \ar[r]^-{( \ti{\varphi}, \ti{\varphi} )}  & \ti{J} \times \ti{J} \ar[d]^{ (\mrm{Nm}, \mrm{Nm}) } \\
C\times C \ar[r]^-{(\varphi, \varphi)}& J \times J,
}$$
we obtain 
\begin{equation} \label{pPNtoNN}
(\mrm{Nm} \circ \ti{\varphi}, \mrm{Nm} \circ \ti{\varphi})^\ast \ms{L}_{J} \simeq (\pi, \pi)^\ast \mc{O}_{C^2}(\Delta_C -C\times o - o\times C).
\end{equation}
Since $\psi = u\circ \ti{\varphi}$, then from (\ref{pPPtoJJ}), (\ref{LJtoBB}) and (\ref{pPNtoNN}) we get
$$(\psi, \psi)^\ast c_1(\ms{L}_P) \sim_{\mrm{alg}} 2(\Delta_{\ti{C}} - \ti{C} \times \ti{o} - \ti{o} \times \ti{C}) - (\pi, \pi)^\ast (\Delta_C - C\times o - o\times C).$$
Furthermore, $(\pi, \pi)^\ast \Delta_C = \Delta_{\ti{C}}+(1, \iota)^\ast \Delta_{\ti{C}},$
where $1\colon \ti{C} \ra \ti{C}$ denotes the identity morphism, and therefore, 
\begin{equation} \label{rrLP}
(\psi, \psi)^\ast c_1(\ms{L}_P) \sim_{\mrm{alg}} \Delta_{\ti{C}}-(1,\iota)^*\Delta_{\ti{C}}.
\end{equation}
Substituting (\ref{rrLP}) into the identity (\ref{vastxi}), we see that $v^\ast \xi$ is algebraically equivalent to a linear combination of divisors of the form $q_i^\ast \ti{o}$ and $q_{ij}^\ast \beta^\ast \Delta_{\ti{C}}$, where $\beta$ is one of the morphisms $(1, 1)$ or $(1, \iota)$. 
The cycles $v_\ast q_{ij}^\ast \Delta_{\ti{C}}$ and $v_\ast q_{ij}^\ast(1,\iota)^\ast \Delta_{\ti{C}}$ are proportional to a cycle of the form $(l_{1\ast} [\psi(\ti{C})])\ast \cdots \ast (l_{(r-1)\ast} [\psi(\ti{C})])$, where $(l_1, \ldots, l_{r-1})$ is $$(k_1, \ldots, \widehat{k_i}, \ldots, \widehat{k_j}, \ldots, k_r, k_i+k_j) \textrm{ and } (k_1, \ldots, \widehat{k_i}, \ldots, \widehat{k_j}, \ldots, k_{r}, k_i-k_j),$$ respectively, and the symbol $\widehat{k_j}$ means that $k_j$ is omitted from the list. Since $v_\ast q_i^\ast \ti{o}$ is proportional to the cycle  $(k_{1\ast} [\psi(\ti{C})])\ast \cdots \ast (k_{r\ast} [\psi(\ti{C})])$ with $k_{i\ast} [\psi(\ti{C})]$ omitted, we see that $v_\ast v^\ast \xi$ belongs to $\mc{F}_P(\ms{T}^\p(P))$. 
\end{proof}

\begin{RemNotPolynInXi}\label{RemNotPolynInXi}\emph{
If $P$ is generic and of dimension $p\ge 5$, then by the proof of [Fa, Thm.4.5, p.117], the class $([\psi(\ti{C})]^{*r})_{(2)}$ is non-zero in $A(P)$ for $1 \le r \le p-3$. Therefore, by Fourier duality, $\zeta_1^j\zeta_3 \ne 0$ in $A(P)$ for $0 \le j \le p - 4$.}
\end{RemNotPolynInXi}

\begin{RemUniquenessOfT}\emph{The Torelli theorem implies that every principally polarized Jacobian has a unique tautological ring in the sense of [Be04]. It is well known that the analog of the Torelli theorem does not hold for Prym varieties, [Do, IL, Ve].  We do not know whether the tautological ring of the pair $(P, \psi(\ti{C}))$ is always independent of the choice of an Abel-Prym curve. However, we know that $\ms{T}(P, \psi(\ti{C}))$ is independent of this choice when $P$ is a general Prym of dimension $2 \le p \le 4$. This follows from the connectedness of the general fiber of the Prym map, see [Do, \S 6; Iz95]. Furthermore, due to the generic injectivity of the Prym map for curves of genus $g \ge 7$, see [De, Ka, FS, We87], the tautological ring $\ms{T}(P, \psi(\ti{C}))$ of a generic Prym variety $P$ of dimension $\ge 6$ depends only on $P$.}
\end{RemUniquenessOfT}

\section{Algebraic equivalence of special subvarieties.} \label{SecOnAlgEq}
In this section we let $r,d,g$ be integers such that $0<2r<d<2g$.  Also, $S$ will denote a smooth connected but not necessarily complete curve. For a morphism $X\ra S$, the fiber over $s$ is denoted by $X_s$, and for a sheaf $\mc{F}$ on $X$, $\mc{F}_s:=\mc{F}_{|X_s}$ is the restriction. If $X$ is an integral projective scheme, we let $\mrm{Pic}^0_X$ be the connected component of the identity in the Picard scheme of $X$. If $X$ is a smooth curve, we let $X_d$ denote the $d^{\mrm{th}}$ symmetric product of $X$. An integral curve with $n$ ordinary double points (resp., $n$ ordinary cusps) and no other singularities will be called $n$-nodal (resp., $n$-cuspidal). 

$B$ will denote a smooth connected curve of genus $g-1$. $B_{pq}$ will denote the $1$-nodal curve obtained by gluing distinct points $p$ and $q$ on $B$.  Also, $B_{pp}$ will denote the $1$-cuspidal curve, whose normalization is $B$, and such that the point $p\in B$ maps to the cusp of $B_{pp}$. For simplicity, both normalization morphisms $B \ra B_{pq}$ and $B \ra B_{pp}$ will be denoted by $\nu$ and the distinction will be clear from the context.

\subsection{Special subvarieties} \label{Special subvarieties} Assume that $C$ has a complete $g^r_d$, which is viewed as a subvariety $G_d \subset C_d$ isomorphic to $\bP^r$. Let us recall the definition of the  special subvarieties $V_0$ and  $V_1$ of $P$ associated to $G_d$, cf.\! [Be82]. First, we assume that $G_d$ contains a reduced divisor.  Consider the following commutative diagram 
$$\xymatrix{
\ti{C}_d \ar[r]^-{\ti{\varphi}_d} \ar[d]_-{\pi_d} & \ti{J} \ar[d]^-{\mrm{Nm}} \\
C_d \ar[r]^-{\varphi_d} & J,
}$$ 
where the horizontal maps are abelian sum mappings $\varphi_d\colon D \mapsto \mc{O}_C(D-do)$, $\ti{\varphi}_d\colon \ti{D} \mapsto \mc{O}_{\ti{C}}(\ti{D}-d\ti{o})$ and $\pi_d$ is induced by $\pi\colon \ti{C} \ra C$.
The variety $\ti{\varphi}_d(\pi_d^{-1}(G_d))$ is in the kernel of the norm map $\mrm{Nm}\colon \ti{J} \ra J$ and has two connected components $V_0$ and $V_1$, [Be82, p.365; We81, p.98]. After a translation of one of the components $V_0$ or $V_1$, we assume that both of them are contained in $P$ and, by definition, $V_0$ and $V_1$ are called \emph{special subvarieties} of $P$. The union $V_0 \cup V_1$ is denoted by $V$. In the case when $G_d$ has a base-point of multiplicity $\ge 2$, $V$ is non-reduced and we may define it as a cycle $V:=\ti{\varphi}_{d*}(\pi_d^*(G_d))$ with multiplicities, [Be82, p.359]. In this case $V_0$ and $V_1$ can also be defined as cycles.

By Clifford's theorem, the inequalities $0 < d < 2g$ imply that $d>2r$, except when the $g^r_d$ is the canonical system or a multiple of a $g^1_2$. In these two cases the special subvarieties are not even homologically equivalent, see [Be82, Rem.3, p.362 and p.366]. However, if $2r< d$, then the subvarieties $V_0$ and $V_1$ are homologically equivalent, i.e., $V_0$ and $V_1$ have the same cohomology class 
$$2^{d-2r-1}\cdot\frac{\xi^{g-r-1}}{(g-r-1)!}$$
in $H^{2(g-r-1)}(P, \mb{Z})$, see [Be82, Prop.1, p.360 and Thm.1, p.364].

\subsection{Compactified Jacobians and the Abel map}\label{Compactified Jacobians} We shall use the results of [EGK] in the sequel and we recall the necessary notation. Let $\mc{C} \ra S$ be a flat family of integral curves. We assume that the family $\mc{C}/S$ has a section $\sigma$ whose image lies in the smooth locus of the morphism $\mc{C} \ra S$. Given an integer $n$, a \emph{torsion-free rank one sheaf of degree $n$} on $\mc{C}/S$ is an $S$-flat coherent $\mc{O}_{\mc{C}}$-module $\mc{F}$ such that  $\mc{F}_s$ is a torsion free rank one sheaf on the fiber $\mc{C}_s$ and $\chi(\mc{F}_s)-\chi(\mc{O}_{\mc{C}_s}) =n$ for every $s \in S$. There is a projective $S$-scheme $\bar{J}_{\mc{C}/S}^{n}$, called the \emph{compactified Jacobian of $\mc{C}/S$}, that parametrizes torsion-free rank one sheaves of degree $n$ on the fibers of $\mc{C}/S$, see [EGK, p.594; Al]. There is also an open subscheme $J_{\mc{C}/S}^{n} \subset \bar{J}_{\mc{C}/S}^{n}$, called the (\emph{generalized}) \emph{Jacobian of $\mc{C}/S$}, parametrizing those sheaves that are invertible. For a single integral curve $X$, we use the notation $J^n_X$ and $\bar{J}^n_X$, for the analogous notions. The section $\sigma\colon S \ra \mc{C}$ gives an invertible sheaf $\mc{N}$ of degree one on $\mc{C}$, which determines an \emph{Abel map} $A_{\mc{N}}\colon \mc{C} \ra \bar{J}^0_{\mc{C}/S}$, see [EGK, p.595].  On the fiber $\mc{C}_s$, the Abel map is given by  $x \mapsto \frak{m}_x \otimes \mc{N}_s$, where $\frak{m}_x$ is the maximal ideal in the local ring $\mc{O}_{\mc{C}_s,x}$. If the geometric fibers of $\mc{C}\ra S$ have double points at worst, then by [EGK, Thm.2.1] the Abel map induces an isomorphism of group-schemes:
$$A_{\mc{N}}^*\colon \mrm{Pic}^0_{\bar{J}^0_{\mc{C}/S}} \ra J^0_{\mc{C}/S}.$$

\subsection{Presentation schemes} \label{Presentation schemes}
In Section \ref{The Wirtinger family} we shall use the construction of [AK], called the \emph{presentation scheme}, which we recall next. Let $X$ be an integral curve with a unique double point and let $\nu\colon X^\p \ra X$ be the normalization. Given an integer $n$, the presentation scheme $P_X^n$ parametrizes injective morphisms $h\colon L \hookrightarrow \nu_* M$, called presentations, such that $L \in \bar{J}^n_X$ and $M \in J^n_{X^\p}$. 
By [AK], the presentation scheme $P_X^n$ fits into a diagram
$$\xymatrix{ 
P_X^n \ar[r]^-{\kappa} \ar[d]_-\lambda & \bar{J}^n_X \\
J^n_{X^\p},
}$$
where $\kappa$ and $\lambda$  send ($h\colon L \hookrightarrow \nu_* M)$ to $L$ and $M$, respectively. Note that in [AK] the degree of $L$ is $\chi(L)$, not $\chi(L) - \chi(\mc{O}_X)$, as it is for us.  

When $X=B_{pq}$ is $1$-nodal, $\lambda\colon P_{B_{pq}}^n \ra J^n_{B}$ is a $\bP^1$-bundle, which has two distinguished sections $s_p$ and $s_q$. The morphism $\kappa\colon P_{B_{pq}}^n \ra  \bar{J}^n_{B_{pq}}$ identifies the images $\mrm{Im}(s_p)$ and $\mrm{Im}(s_q)$ with a shift and is an isomorphism outside of $\mrm{Im}(s_p) \cup \mrm{Im}(s_q)$. More precisely, if $M \in J^n_{B}$, then $\kappa$ identifies $s_p(M)$ with $s_q(M\otimes \mc{O}_{B}(q-p))$. Furthermore, the common image of $\kappa \circ s_p$ and $\kappa\circ s_q$ coincides with $\pl \bar{J}^n_{B_{pq}}$, the locus of non-invertible sheaves. 
 
When $X=B_{pp}$ is $1$-cuspidal, let $p^\p \subset B$ be the fiber over the cusp of $B_{pp}$. There is an embedding $s_{p^\p}\colon J^n_{B}\times p^\p \ra P^n_{B_{pp}}$ such that $\lambda \circ s_{p^\p} \colon J^n_{B}\times p^\p \ra J^n_{B}$ is the first projection. The scheme $P^n_{B_{pp}}$ is non-reduced along $\mrm{Im}(s_{p^\p})$, although $\bar{J}^n_{B_{pp}}$ itself is reduced. The morphism $\kappa$ is bijective but is not an isomorphism: it maps the non-reduced locus $\mrm{Im}(s_{p^\p})$ to the reduced subscheme $\pl \bar{J}^n_{B_{pp}} \subset \bar{J}^n_{B_{pp}}$ and is an isomorphism outside of $\mrm{Im}(s_{p^\p})$.   
 
\subsection{Families of Brill-Noether loci} \label{Brill-Noether loci}  The scheme $\mc{C}\times_S \bar{J}^{d}_{\mc{C}/S}$ admits a Poincar\'e sheaf, which represents the universal family, see [EGK, p.594] and references therein. Using this Poincar\'e sheaf, the construction in [AC, p.6] carries over verbatim to the present situation and gives an $S$-subscheme $\mc{W}^r_d(\mc{C})$ of $\bar{J}^{d}_{\mc{C}/S}$, whose fiber over $s$ is $\ol{W}^r_d(\mc{C}_s)$, the subscheme of $\bar{J}^{d}_{\mc{C}_s}$ parametrizing sheaves $F$ with $h^0(\mc{C}_s, F) \ge r+1$. The sets of locally free and non-locally free elements of $\ol{W}^r_d(\mc{C}_s)$ are denoted by $W^r_d(\mc{C}_s)$ and $\pl \ol{W}^r_d(\mc{C}_s)$, respectively. If $\mc{C}_s$ is smooth then $\ol{W}^r_d(\mc{C}_s)=W^r_d(\mc{C}_s)$ is the classical Brill-Noether scheme as in [ACGH, p.153]. 

\medskip

In Section \ref{The Wirtinger family} we shall make repeated use of the following two observations. 

\begin{flatness criterion}[Flatness criterion] \label{flatness criterion} If all fibers of $\mc{W}^r_d(\mc{C}) \ra S$ are reduced and have the expected dimension $\rho(g,r,d)$, then $\mc{W}^r_d(\mc{C})$ is flat over $S$.
\end{flatness criterion} 
\begin{proof} Since $S$ is a smooth connected curve by our ongoing assumption, then by [Ha, Prop.9.7, p.257] it suffices to check that no component of $\mc{W}^r_d(\mc{C})$ is contained in a fiber $\ol{W}^r_d(\mc{C}_s)$ for some $s \in S$. Since $\mc{W}^r_d(\mc{C})$ is determinantal, then each irreducible component of $\mc{W}^r_d(\mc{C})$ has dimension $\ge \rho(g,r,d)+1$ at every point. But $\dim \ol{W}^r_d(\mc{C}_s)=\rho(g,r,d)$ for all $s\in S$ by assumption, and therefore, no component of $\mc{W}^r_d$ can be contained in a fiber (this argument is adapted from [HM, p.267]).
\end{proof}

\begin{principle of connectedness}[Principle of connectedness] \label{principle of connectedness}
Let $X \ra S$ be a flat family of projective schemes. If there exists a point $s_0 \in S$ such that $X_{s_0}$ is connected and reduced, then $X_s$ is connected for each point $s\in S$.  
\end{principle of connectedness}
\begin{proof}
The hypotheses of the proposition imply that $h^0(\mc{O}_{X_{s_0}})=1$. By upper semicontinuity of $s \mapsto h^0(\mc{O}_{X_{s}})$, there is a Zariski open subset $U \subset S$ such that for all points $s \in U$, $X_s$ is connected. By [Ha, Ex.11.4, p.281], $X_s$ is connected for each point $s \in S$. 
\end{proof}

\subsection{Main result} \label{The Wirtinger family}
The following proposition is due to E. Izadi. 

\begin{Dr. Izadi's argument} \label{Dr. Izadi's argument}
Let $C$ be a smooth curve of genus $g$, which has a complete $g^r_d$ with $0<2r<d <2g$, and let $\ti{C} \ra C$ be a connected \'etale double cover. If $W^r_d(C)$ is connected and either $W^r_{d-1}(C)$ or $W^{r+1}_{d+1}(C)$ is nonempty, then the special subvarieties $V_0$ and $V_1$ associated to the $g^r_d$ are algebraically equivalent.
\end{Dr. Izadi's argument}
\begin{proof}
The cover $\ti{C} \ra C$ determines a point of order two in $J^0_C$. Translating by $\mc{O}_C(do)$ and going to the dual abelian variety of $J^d_C$, gives a point of order two in $\mrm{Pic}^0(J^d_C)$, which determines an \'etale double cover $\wti{J}^d_C \ra J^d_C$. Assume that $W^r_{d-1}(C)$ is nonempty. Take $L \in W^r_{d-1}(C)$ and let $C \rightarrow W^r_d(C)$ be the embedding $x \mapsto L(x)$. 
We may check that there is a commutative diagram
$$\xymatrix{
\ti{C} \ar[r] \ar[d] & \wti{W}^r_d(C) \ar[r] \ar[d] & \wti{J}^d_C \ar[d] &\\
C \ar[r] & W^r_d(C) \ar[r] & J^d_C,
}$$
where the two squares are Cartesian.  The double cover $\wti{J}^d_C \ra J^d_C$ restricts to the connected double cover $\ti{C} \ra C$, and therefore, the intermediate double cover $\wti{W}^r_d(C)\ra W^r_d(C)$ is non-trivial. Since $W^r_d(C)$ is connected, this shows that $\wti{W}^r_d(C)$ is also connected. The special subvarieties $V_0$ and $V_1$ are members of a family of cycles over $\wti{W}^r_d(C)$. Since $\wti{W}^r_d(C)$ is connected, then $V_0$ and $V_1$ are algebraically equivalent. The case when $W^{r+1}_{d+1}(C)$ is nonempty is analogous (embed $C$ in $W^r_d(C)$ by $x \mapsto L(-x)$ for a fixed $L \in W^{r+1}_{d+1}(C)$). 
\end{proof}

If $\rho(g,r,d) < \min\{r+1, g-d+r\}$ and $C$ is general in the sense of Brill-Noether theory, both  $W^r_{d-1}(C)$ and $W^{r+1}_{d+1}(C)$ are empty, because $\rho(g,r,d-1)$ and $\rho(g,r+1,d+1)$ are negative, [ACGH, Thm.1.5, p.214]. Nevertheless, in what follows we shall show that $V_0$ and $V_1$ are algebraically equivalent whenever $\rho(g,r,d)>0$ and $W^r_d(C)$ is reduced and of the expected dimension. 

\medskip

The \emph{Wirtinger cover} is the \'etale double cover $\ti{B}_{pq} \ra B_{pq}$, where the curve $\ti{B}_{pq}$ is obtained from two copies of $B$ by identifying $p$ and $q$ on one copy with $q$ and $p$, respectively, on the other copy. Using [Be77, 6.1] we may find a smooth connected curve $S$ such that $\ti{C} \ra C$ and $\ti{B}_{pq} \ra B_{pq}$ vary in a family of double covers 
$$\xymatrix{
\ti{\mc{C}} \ar[rr]^-{\tau} \ar[dr] & & \mc{C} \ar[dl] \\
& S &
}$$
($S$ in [Be77] is not the same as our $S$) with the following properties: the fibers of $\mc{C}\ra S$ are integral and at worst nodal, $\ti{\mc{C}}$ and $\mc{C}$ are flat over $S$, $\tau$ is \'etale, and there is a section $\sigma$ of $\mc{C} \ra S$, which induces a degree one invertible sheaf $\mc{N}$ on $\mc{C}$, as in Section \ref{Compactified Jacobians}. Assume that $\ti{B}_{pq} \ra B_{pq}$ lies over the special point $s_0 \in S$. By Section \ref{Brill-Noether loci} there is an associated family $\mc{W}^r_d(\mc{C})\ra S$.  

\medskip

The \'etale double cover $\tau\colon\ti{\mc{C}} \ra \mc{C}$ induces a morphism $\mc{O}_{\mc{C}} \hookrightarrow \tau_\ast \mc{O}_{\ti{\mc{C}}}$, whose cokernel is an invertible sheaf $\mc{L}$ with the property $\mc{L}^{2}\simeq \mc{O}_{\mc{C}}$. Let $\mc{M}:=(A_{\mc{N}}^*)^{-1}(\mc{L})$, then $\mc{M}^{2}$  is isomorphic to the pull-back of an invertible sheaf on $S$. After replacing $S$ by a Zariski open subset containing $s_0$, we may assume that $\mc{M}^{2}$ is trivial. The relative spectrum $\wti{J}^{0}_{\mc{C}/S}:= \mrm{\textbf{Spec}}(\mc{O}_{ \bar{J}^{0}_{\mc{C}/S}}\oplus \mc{M})$ over $\bar{J}^{0}_{\mc{C}/S}$ gives an \'etale double cover $\wti{J}^{0}_{\mc{C}/S} \ra \bar{J}^{0}_{\mc{C}/S}$. Pulling back via the isomorphism $\bar{J}^{d}_{\mc{C}/S}\ra \bar{J}^{0}_{\mc{C}/S}$ of tensoring with $\mc{N}^{-d}$, we obtain an \'etale double cover $\wti{J}^{d}_{\mc{C}/S} \ra \bar{J}^{d}_{\mc{C}/S}$, whose restriction to $\mc{W}^r_d(\mc{C})$ is denoted by  
$\wti{\mc{W}}^r_d(\mc{C}) \ra \mc{W}^r_d(\mc{C})$. Over the special point $s_0\in S$, we use the notation $\wti{J}^d_{B_{pq}} \ra \bar{J}^d_{B_{pq}}$ and $\wti{W}^r_d(B_{pq}) \ra \ol{W}^r_d(B_{pq})$ for the induced double covers. We have:

\begin{wirtcon} \label{wirtcon}
If $0 < \rho(g,r,d) < \min\{r+1,g-d+r\}$, then a general 1-nodal curve $B_{pq}$ of arithmetic genus $g$ satisfies: (1)  $\ol{W}^r_d(B_{pq})$ is connected, reduced, and has the expected dimension $\rho(g,r,d)$; (2) $\wti{W}^r_d(B_{pq})$ is connected. 
\end{wirtcon}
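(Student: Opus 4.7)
My plan is to analyze $\bar{W}^r_d(B_{pq})$ via the presentation scheme $P^d_{B_{pq}}$ of Section 3.3 and to propagate connectedness from nearby smooth fibers using the flatness criterion and the principle of connectedness. For the dimension and reducedness claims in (1), I would use the $\mathbb{P}^1$-bundle $\lambda\colon P^d_{B_{pq}}\to J^d_B$: a point in the fiber over $M\in J^d_B$ is a presentation $L\hookrightarrow\nu_\ast M$ parametrized by a line $(a:b)\in\mathbb{P}^1$ in $M_p\oplus M_q$, and taking global sections of $0\to L\to\nu_\ast M\to k(\mathrm{node})\to 0$ gives $h^0(L)=\dim\{s\in H^0(M):s(p)\,b=s(q)\,a\}$. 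Hence $h^0(L)\ge r+1$ iff the image of $\mathrm{ev}_{p,q}\colon H^0(M)\to M_p\oplus M_q$ lies in $k\cdot(a,b)$; this forces $M\in W^r_d(B)$ with $\mathrm{ev}_{p,q}$ of rank $\le 1$. Since $B$ is a general smooth curve of genus $g-1$, classical Brill-Noether theory gives $W^r_d(B)$ smooth of dimension $\rho(g-1,r,d)=\rho(g,r,d)+r$, and $W^{r+1}_d(B)=\emptyset$ from $\rho(g,r,d)<g-d+r$. A Porteous-type computation shows the rank-$\le 1$ locus has codimension $r$ in $W^r_d(B)$, hence dimension $\rho(g,r,d)$; generic transversality of $\mathrm{ev}_{p,q}$ for generic $p,q$ yields reducedness. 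The non-invertible boundary $\partial\bar{W}^r_d(B_{pq})\cong W^r_{d-1}(B)$ has dimension $\rho(g,r,d)-1$. For connectedness, after shrinking $S$ around $s_0$ I may assume each smooth fiber $\mathcal{C}_s$ is Brill-Noether-Petri general, so $W^r_d(\mathcal{C}_s)$ is smooth irreducible of dimension $\rho(g,r,d)$; combined with the above, every fiber of $\mathcal{W}^r_d(\mathcal{C})/S$ is reduced of expected dimension, so the flatness criterion gives flatness and the principle of connectedness (applied with a smooth base point) yields connectedness of every fiber, including $\bar{W}^r_d(B_{pq})$.

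For (2), since $\bar{W}^r_d(B_{pq})$ is connected, $\tilde{W}^r_d(B_{pq})\to\bar{W}^r_d(B_{pq})$ is connected iff the 2-torsion sheaf $\mathcal{M}$ restricts nontrivially to $\bar{W}^r_d(B_{pq})$. Mirroring Izadi's argument, via the Abel isomorphism the pullback of $\mathcal{M}$ along the Abel map $B_{pq}\to\bar{J}^d_{B_{pq}}$ is the Wirtinger sheaf $\mathcal{L}$, whose associated cover $\tilde{B}_{pq}\to B_{pq}$ is connected by construction; however, the natural sources for an Abel-style translate into $\bar{W}^r_d(B_{pq})$---$\bar{W}^r_{d-1}(B_{pq})$ and $\bar{W}^{r+1}_{d+1}(B_{pq})$---are both empty under our hypothesis by the analysis of the previous paragraph applied to $(r,d-1)$ and $(r+1,d+1)$, so a direct Izadi-style embedding is unavailable.

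The main obstacle, then, is constructing a connected subvariety of $\bar{W}^r_d(B_{pq})$ on which the double cover is nonsplit. I would exploit the $\mathbb{G}_m$-torsor structure $\nu^\ast\colon W^r_d(B_{pq})\to W^r_d(B)_{\mathrm{rk}\le 1}$: a point above $M$ is parametrized by the unique $\alpha\in\mathbb{G}_m$ such that $\mathrm{im}(\mathrm{ev}_{p,q})$ lies in the line $x=\alpha y$, and expressing $\alpha$ as a ratio of local coordinates of a generator of $\mathrm{im}(\mathrm{ev}_{p,q})$ gives a regular map $W^r_d(B_{pq})\to\mathbb{G}_m$. This map is non-constant as $M$ varies through the irreducible positive-dimensional $W^r_d(B)_{\mathrm{rk}\le 1}$, so its extension to $\mathbb{P}^1$ is dominant on $\bar{W}^r_d(B_{pq})$. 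Pulling back the Wirtinger double cover $\mathbb{G}_m\xrightarrow{t\mapsto t^2}\mathbb{G}_m$ along this dominant map yields a nontrivial étale double cover of $\bar{W}^r_d(B_{pq})$, identified with $\tilde{W}^r_d(B_{pq})$, and hence $\tilde{W}^r_d(B_{pq})$ is connected.
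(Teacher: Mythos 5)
Your argument for part (2) fails at its central step. The paper shows (in the discussion preceding the proposition) that $\kappa^*(\mc{M}_{s_0})=\mc{O}_{P^d_{B_{pq}}}$, i.e.\ the double cover $\wti{J}^d_{B_{pq}}\ra \bar{J}^d_{B_{pq}}$ becomes \emph{trivial} after pulling back to the presentation scheme (the normalization); since $\kappa$ is an isomorphism over the locally free locus, the restriction of $\wti{W}^r_d(B_{pq})\ra \ol{W}^r_d(B_{pq})$ to $W^r_d(B_{pq})$ is the trivial (split) double cover. So no construction carried out on the locally free locus --- in particular your proposed pullback of $t\mapsto t^2$ along a ``slope map'' to $\mb{G}_m$ --- can detect the cover there: it is two disjoint copies. (Separately, the slope $\alpha$ is only defined after choosing trivializations of $M_p$ and $M_q$ varying with $M$, which do not exist globally, and the fiber of $\nu^*$ over a rank-one point is a single point, not a $\mb{G}_m$-torsor; but these are secondary.) The nontriviality of the cover over $\ol{W}^r_d(B_{pq})$ lives entirely in the gluing over the non-locally-free boundary $\pl\ol{W}^r_d(B_{pq})$: the cover is two copies of $W(p,q):=\kappa^{-1}(\ol{W}^r_d(B_{pq}))$ glued by swapping $W(p)=\mrm{Im}(s_p)\cap W(p,q)$ and $W(q)=\mrm{Im}(s_q)\cap W(p,q)$. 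Connectedness of $\wti{W}^r_d(B_{pq})$ therefore requires (a) $W(p)$ and $W(q)$ nonempty, which is exactly where the hypothesis $\rho(g,r,d)>0$ enters (it forces $W^r_{d-1}(B)\ne\emptyset$, hence $\pl\ol{W}^r_d(B_{pq})\ne\emptyset$), and (b) connectedness of $W(p,q)$ itself. Your argument uses $\rho>0$ nowhere in part (2); when $\rho=0$ the boundary can be empty and $\wti{W}^r_d(B_{pq})$ is then genuinely disconnected, so any argument that does not invoke $\rho>0$ must be wrong. Item (b) is the real work of the paper: it is proved by degenerating $p+q$ to $2p$, replacing $W(p,q)$ by the determinantal locus $Y(p,q)\subset J^d_B$ (Lemma on the homeomorphism $W(p,q)\simeq Y(p,q)$ under the emptiness of $W^{r+1}_d(B)$ and $W^r_{d-2}(B)$), and controlling $Y(p,p)$ on a $1$-cuspidal curve via Eisenbud--Harris. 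None of this is replaced by anything in your sketch.

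For part (1) your route differs from the paper's (which deduces connectedness, reducedness and expected dimension from the Eisenbud--Harris result for $g$-cuspidal rational curves plus openness, rather than from a Porteous computation on the presentation scheme plus degeneration to a Brill--Noether general smooth fiber). That route is plausible but has its own gaps: reducedness of $W(p,q)$ inside $P^d_{B_{pq}}$ does not directly give reducedness of $\ol{W}^r_d(B_{pq})$ along $\pl\ol{W}^r_d(B_{pq})$, since $\kappa$ is a gluing there and the scheme structures differ; and shrinking $S$ around $s_0$ does not let you assume the nearby smooth fibers of the specific family from [Be77] are Petri general. These could likely be repaired, but the failure of part (2) is structural.
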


Assuming the above proposition, we prove the main result of this section:

\begin{algeq} \label{algeq}
Let $C$ be a smooth curve of genus $g$, which has a complete $g^r_d$, and such that $W^r_d(C)$ is reduced and of dimension $\rho(g,r,d)$. Let $\ti{C} \ra C$ be a connected \'etale double cover and let $V_0,V_1$ be the special subvarieties associated to the $g^r_d$. If $\rho(g,r,d)>0$, then $V_0$ and $V_1$ are algebraically equivalent.
\end{algeq}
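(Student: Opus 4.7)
The plan is to reduce the algebraic equivalence of $V_0$ and $V_1$ to the connectedness of the \'etale double cover $\wti{W}^r_d(C)$ of $W^r_d(C)$, and then to establish this connectedness by degenerating $C$ to a general $1$-nodal curve $B_{pq}$ where Proposition \ref{wirtcon} applies. Once $\wti{W}^r_d(C)$ is known to be connected, the concluding argument of Proposition \ref{Dr. Izadi's argument} applies verbatim: the cycles $V_0$ and $V_1$ are the two members of an algebraic family of $(g-r-1)$-cycles on $P$ parametrized by $\wti{W}^r_d(C)$, realized as the fibers over the two points of $\wti{W}^r_d(C)$ lying above the chosen $L \in W^r_d(C)$ representing the given $g^r_d$, so connectedness of the parameter space forces $V_0 \sim_{\mrm{alg}} V_1$.

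To set up the degeneration, we apply [Be77, 6.1] exactly as in Section \ref{The Wirtinger family}, obtaining the family $\tau \colon \ti{\mc{C}}\ra \mc{C} \ra S$ over a smooth connected curve $S$ such that the Wirtinger cover $\ti{B}_{pq}\ra B_{pq}$ of a general $1$-nodal curve of arithmetic genus $g$ occurs over some point $s_0 \in S$ while the given cover $\ti{C}\ra C$ occurs over some point $s_1 \in S$. From this we form the relative Brill-Noether scheme $\mc{W}^r_d(\mc{C}) \ra S$ and, using the invertible sheaf $\mc{M}$ constructed from $\tau$ in Section \ref{The Wirtinger family}, its \'etale double cover $\wti{\mc{W}}^r_d(\mc{C}) \ra \mc{W}^r_d(\mc{C})$.

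The next step is to invoke the flatness criterion (Proposition \ref{flatness criterion}) followed by the principle of connectedness (Proposition \ref{principle of connectedness}). By hypothesis, the fiber of $\mc{W}^r_d(\mc{C})\ra S$ over $s_1$ is $W^r_d(C)$, reduced of dimension $\rho(g,r,d)$; by Proposition \ref{wirtcon}(1) the fiber over $s_0$ is $\ol{W}^r_d(B_{pq})$, reduced of the same dimension. The locus in $S$ over which the fiber of $\mc{W}^r_d(\mc{C})$ is reduced of dimension $\rho(g,r,d)$ is Zariski open (upper semicontinuity of fiber dimension, determinantality of $\mc{W}^r_d(\mc{C})$, and openness of the reduced locus); shrinking $S$ to such an open subset still containing $s_0$ and $s_1$, the flatness criterion yields flatness of $\mc{W}^r_d(\mc{C})\ra S$, and hence also of its \'etale double cover $\wti{\mc{W}}^r_d(\mc{C})\ra S$, which is moreover projective. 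Its fiber over $s_0$ is the connected scheme $\wti{W}^r_d(B_{pq})$ of Proposition \ref{wirtcon}(2), and this fiber is reduced since it \'etale-covers the reduced scheme $\ol{W}^r_d(B_{pq})$. The principle of connectedness then forces every fiber of $\wti{\mc{W}}^r_d(\mc{C})\ra S$ to be connected; in particular, $\wti{W}^r_d(C)$ is connected, as required.

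The principal obstacle does not lie in this assembly, which is a direct combination of the flatness criterion, the principle of connectedness, and the family-of-cycles argument from Izadi's proposition. Rather, it lies in Proposition \ref{wirtcon}, whose role is to supply the two crucial inputs on the nodal fiber: (a) reducedness of $\ol{W}^r_d(B_{pq})$ of the expected dimension, which triggers the flatness criterion, and (b) connectedness of the non-trivial \'etale double cover $\wti{W}^r_d(B_{pq})$, which is the geometric input that forces $V_0 \sim_{\mrm{alg}} V_1$. Establishing these will presumably require a careful analysis via the presentation scheme of $B_{pq}$ described in Section \ref{Presentation schemes} together with classical Brill-Noether-theoretic information on the smooth normalization $B$ of genus $g-1$.
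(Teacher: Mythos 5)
Your overall strategy---reduce algebraic equivalence of $V_0$ and $V_1$ to connectedness of $\wti{W}^r_d(C)$ via the family-of-cycles argument of Proposition \ref{Dr. Izadi's argument}, then obtain that connectedness by degenerating to the Wirtinger cover of a general $1$-nodal curve and combining the flatness criterion with the principle of connectedness---is exactly the paper's argument, and the details you supply (openness of the locus where the fibers are reduced of dimension $\rho(g,r,d)$, flatness of $\wti{\mc{W}}^r_d(\mc{C}) \ra S$ inherited from $\mc{W}^r_d(\mc{C}) \ra S$, reducedness of the fiber $\wti{W}^r_d(B_{pq})$ because it \'etale-covers a reduced scheme) all match the paper's proof.

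There is, however, one genuine gap: you invoke Proposition \ref{wirtcon} unconditionally, but that proposition is only stated under the hypothesis $0 < \rho(g,r,d) < \min\{r+1,\, g-d+r\}$, whereas Theorem \ref{algeq} assumes only $\rho(g,r,d) > 0$. As written, your argument therefore does not cover the case $\rho(g,r,d) \ge \min\{r+1,\, g-d+r\}$. The paper disposes of that case first, by a different and easier route: if $\rho(g,r,d) \ge r+1$ then $\rho(g,r,d-1) = \rho(g,r,d) - (r+1) \ge 0$, so $W^r_{d-1}(C)$ is nonempty by the existence theorem of Kempf and Kleiman--Laksov, and similarly $\rho(g,r,d) \ge g-d+r$ forces $W^{r+1}_{d+1}(C)$ to be nonempty; since $\rho(g,r,d)>0$ also guarantees that $W^r_d(C)$ is connected, Proposition \ref{Dr. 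Izadi's argument} then applies directly, with no degeneration at all. Only after this reduction does the paper assume $0 < \rho(g,r,d) < \min\{r+1,\, g-d+r\}$ and run the nodal degeneration you describe. You need to add this case split (or else prove a version of Proposition \ref{wirtcon} without the upper bound on $\rho$, which the paper's methods do not provide). A second, purely cosmetic point: in the \'etale case the special subvarieties have codimension $g-r-1$ in $P$, hence dimension $r$, so ``$(g-r-1)$-cycles'' should be read as codimension, not dimension.
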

\begin{proof} The idea of the proof is taken from [IT]. If $\rho(g,r,d) \ge \min\{r+1, g-d+r\}$, then either $W^r_{d-1}(C)$ or $W^{r+1}_{d+1}(C)$ is nonempty [Ke, KL], and Proposition \ref{Dr. Izadi's argument} applies. Thus, we may assume that $0 < \rho(g,r,d) < \min\{r+1, g-d+r\}$. 

Let $B_{pq}$ be a 1-nodal curve satisfying the conclusions of Proposition \ref{wirtcon}.  We shall degenerate $\wti{W}^r_d(C)$ to $\wti{W}^r_d(B_{pq})$ and apply the principle of connectedness from Section \ref{Brill-Noether loci}. Consider the flat family $\tau\colon \ti{\mc{C}} \ra \mc{C}$ over $S$ and the associated family $\wti{\mc{W}}^r_d(\mc{C}) \ra \mc{W}^r_d(\mc{C})$, as described before Proposition \ref{wirtcon}. The schemes $W^r_d(C)$ and $\ol{W}^r_d(B_{pq})$ are reduced, have dimension $\rho(g,r,d)$, and appear as fibers of $w_{\mc{C}}\colon \mc{W}^r_d(\mc{C}) \ra S$. After shrinking $S$, if necessary, we may assume that all fibers of $w_{\mc{C}}$ are reduced and have dimension $\rho(g,r,d)$, hence $w_{\mc{C}}$ is flat by Proposition \ref{flatness criterion}. This implies that the morphism $\wti{\mc{W}}^r_d(\mc{C}) \ra S$ is also flat and all of its fibers are reduced. Since  $\wti{W}^r_d(B_{pq})$ is connected, then by Proposition \ref{principle of connectedness}, so is $\wti{W}^r_d(C)$. As in the proof of Proposition \ref{Dr. Izadi's argument}, this implies that $V_0$ and $V_1$ are algebraically equivalent.
\end{proof}

Before giving the proof of Proposition \ref{wirtcon}, we need two preliminary observations. 
First, let us show that $\wti{J}_{B_{pq}}^d \ra \bar{J}^d_{B_{pq}}$ has a description analogous to the Wirtinger cover $\ti{B}_{pq} \ra B_{pq}$. Using the notation from Section \ref{Presentation schemes}, we see that the morphism $\kappa\colon P^d_{B_{pq}} \ra \bar{J}^d_{B_{pq}}$ is the normalization. Let $\alpha\colon B_{pq} \ra  \bar{J}^d_{B_{pq}}$ be the composition of the Abel map (Section \ref{Compactified Jacobians})  followed by the morphism $\bar{J}^0_{B_{pq}} \ra\bar{J}^d_{B_{pq}}$ of tensoring by $\mc{N}_{s_0}^d$. By the universal property of normalization, the composition $\alpha \circ \nu\colon B\ra \bar{J}^d_{B_{pq}}$ induces a morphism $\ti{\alpha}\colon B \ra P_{B_{pq}}^d$, such that the diagram 
$$\xymatrix{
B \ar[r]^-{\ti{\alpha}} \ar[d]_-\nu & P_{B_{pq}}^d \ar[d]^-\kappa &\\
B_{pq} \ar[r]_-\alpha & \bar{J}^d_{B_{pq}}
}$$
commutes. Applying $\mrm{Pic}^0$, we get a commutative diagram:
$$\xymatrix{
\mrm{Pic}^0_B     & \ar[l]_-{\ti{\alpha}^*}  \mrm{Pic}^0_{P_{B_{pq}}^d} \\
\mrm{Pic}^0_{B_{pq}} \ar[u]^-{\nu^*} & \ar[l]^-{\alpha^*} \ar[u]_-{\kappa^*} \mrm{Pic}^0_{\bar{J}^d_{B_{pq}}}.
}$$
The composition $\lambda \circ \ti{\alpha}\colon B \ra P_{B_{pq}}^d \ra J^d_B$ is an Abel map (see Section \ref{Presentation schemes} for the definition of $\lambda$),  which implies that $\ti{\alpha}^*$ is injective. By construction preceding Proposition \ref{wirtcon}, the invertible sheaves $\mc{L}_{s_0}$ and $\mc{M}_{s_0}$ that determine $\ti{B}_{pq} \ra B_{pq}$ and  $\wti{J}_{B_{pq}}^d \ra \bar{J}^d_{B_{pq}}$, respectively, satisfy $\alpha^*(\mc{M}_{s_0})=\mc{L}_{s_0}$. Also, $\mc{L}_{s_0}$ induces the trivial double cover of $B$, i.e., $\nu^*(\mc{L}_{s_0})=\mc{O}_B$. Therefore, using the commutativity of the above diagram and the injectivity of $\ti{\alpha}^*$, we see that $\kappa^*(\mc{M}_{s_0}) = \mc{O}_{P_{B_{pq}}^d}$. This shows that there is a Cartesian square
$$\xymatrix{P_{B_{pq}}^d \coprod P_{B_{pq}}^d \ar[r] \ar[d] & \wti{J}^d_{B_{pq}} \ar[d] & \\
P_{B_{pq}}^d \ar[r]^-\kappa & \bar{J}^d_{B_{pq}},}$$  
where the left vertical arrow is the trivial double cover. We conclude that $\wti{J}^d_{B_{pq}}$ is obtained from $P_{B_{pq}}^d \coprod P_{B_{pq}}^d$ by gluing $\mrm{Im}(s_p)$ and $\mrm{Im}(s_q)$ on one copy of $P_{B_{pq}}^d$ to $\mrm{Im}(s_q)$ and $\mrm{Im}(s_p)$, respectively, on the other copy (the gluing is with a shift, as in the case of $\bar{J}^d_{B_{pq}}$ described in Section \ref{Presentation schemes}). 

\medskip 

Next, consider the induced double cover $\wti{W}^r_d(B_{pq}) \ra \ol{W}^r_d(B_{pq})$ and define 
$$
W(p,q) := \kappa^{-1}\big(\ol{W}^r_d(B_{pq})\big) \qquad
W(p):=\mrm{Im}(s_p)\cap W(p,q) \qquad W(q):=\mrm{Im}(s_q)\cap W(p,q).
$$
From the above description of $\wti{J}^d_{B_{pq}}$ we see that $\wti{W}^r_d(B_{pq})$ is obtained from two copies of $W(p,q)$ by gluing $W(p)$ and $W(q)$ on one copy to $W(q)$ and $W(p)$, respectively, on the other copy. Therefore, to show that $\wti{W}^r_d(B_{pq})$ is connected, it suffices to prove that $W(p,q)$ is connected and $W(p)$, $W(q)$ are nonempty. Non-emptiness of $W(p)$ and $W(q)$ can be seen as follows. Since  $\dim W^r_{d-1}(B)\ge \rho(g-1,r,d-1)=\rho(g,r,d)-1\ge 0$ (the last inequality holds by our ongoing assumption $\rho(g,r,d) >0$) and $\pl \ol{W}^r_d(B_{pq})=\{\nu_*M \,|\, M\in W^r_{d-1}(B)\}$, then $\pl \ol{W}^r_d(B_{pq})$ is nonempty, [Ke, KL]. It follows from Section \ref{Presentation schemes} that $\kappa$ maps each of $W(p)$ and $W(q)$ onto $\pl \ol{W}^r_d(B_{pq})$, which implies that $W(p)$ and $W(q)$ are nonempty. Connectedness of $W(p,q)$ (for a general $1$-nodal curve $B_{pq}$) is much harder to show and is the main point of the proof of Proposition \ref{wirtcon} below. The idea is to let $p$ and $q$ come together and to consider the analogous locus $W(p,p)$ for the 1-cuspidal curve $B_{pp}$. Although the scheme $W(p,p)$ turns out to be connected, it is also non-reduced. Hence, we may have $h^0(\mc{O}_{W(p,p)}) > 1$, and therefore, we cannot apply the principle of connectedness  directly to conclude that $W(p,q)$ is connected. To overcome this difficulty, we shall use the determinantal loci $Y(x,y)$, which are introduced in the next paragraph.  

\medskip

Let us introduce two last bits of notation, state two lemmas (whose proofs are  at the end of the section), and give the proof of Proposition \ref{wirtcon}. First,  by [EH] if $X$ is an integral curve with double points at worst, there is a scheme $\ol{G}^r_d(X)$ parametrizing pairs $(L, V)$ such that $L \in \ol{W}^r_d(X)$ and $V \subset H^0(X, L)$ is a subspace of dimension $r+1$. There is a forgetful morphism $\ol{G}^r_d(X) \ra \ol{W}^r_d(X)$ and a subscheme $G^r_d(X)\subset \ol{G}^r_d(X)$ pararmetrizing pairs $(L, V)$ with $L$ locally free. Second, given points $x,y \in B$ (not necessarily distinct), define  $$Y(x,y):=\{M \in J^d_B \,|\, h^0(M) \ge r+1 \text{ and } h^0(M(-x-y)) \ge r\}$$ as a subscheme of $J^d_B$ with its natural structure of a determinantal locus, whose expected dimension can be computed using [Fu, Ch.14.3, p.249], and is equal to $\rho(g,r,d)$, see [Ar, Appendix to Ch.4] for details.

\begin{wirtcon helper 1} \label{wirtcon helper 1}
If $\rho(g,r,d) < \min\{r+1, g+r-d\}$, then for a general $1$-cuspidal curve $B_{pp}$ of arithmetic genus $g$, the scheme $Y(p,p)$ is connected, reduced, and of dimension $\rho(g,r,d)$.
\end{wirtcon helper 1}

\begin{wirtcon helper 2} \label{wirtcon helper 2} If both $W^{r+1}_d(B)$ and $W^r_{d-2}(B)$ are empty, then for any distinct points $p,q \in B$, the underlying topological spaces of the schemes $Y(p,q)$ and $W(p,q)$ are homeomorphic.
\end{wirtcon helper 2}

\begin{proof}[Proof of Prop.\ref{wirtcon}]  
Say that an integral curve $X$ of arithmetic genus $g$ has property $\mc{P}$, if $\ol{W}^r_d(X)$ is connected, reduced, and of dimension $\rho(g,r,d)$. 
Note that property $\mc{P}$ is open and depends on the integers $r,d$.  By [EH, Thm.4.5 and Sec.9], there exists a rational $g$-cuspidal curve $X$ for which $\ol{G}^r_d(X)$ is connected, reduced, and of the expected dimension, which implies that $\mc{P}$ holds for $X$. It follows that $\mc{P}$ holds for a general 1-nodal curve. This proves part (1) of the proposition. 

Connectedness of $\wti{W}^r_d$ is an open property of $1$-nodal curves. Therefore, to prove part (2) of the proposition, it suffices to exhibit a single $1$-nodal curve $B_{pq}$ such that $\wti{W}^r_d(B_{pq})$ is connected. This will be done with the aid of $1$-cuspidal curves. By Lemma \ref{wirtcon helper 1}, there exists a $1$-cuspidal curve $B_{pp}$ such that $Y(p,p)$ is connected, reduced, and of dimension $\rho(g,r,d)$. Moreover, by the proof of Lemma \ref{wirtcon helper 1}, we may assume that both $W^{r+1}_d(B)$ and $W^r_{d-2}(B)$ are empty. Let us show that there exists a point $q \in B \backslash \{p\}$ such that $Y(p,q)$ is connected. Take a smooth connected but not necessarily complete curve $T$  parametrizing divisors $\{p+q_t \,|\, t\in T\}$ on $B$ such that $q_t \ne p$ for all $t\in T \backslash \{t_0\}$ and $q_{t_0} = p$.  Consider a family $\ms{Y} \ra T$, such that the fiber over a point $t \in T$ is $Y(p,q_t)$. Since $Y(p,p)$ is connected, reduced, and of the expected dimension $\rho(g,r,d)$, we may replace $T$ with a Zariski open neighborhood of $t_0$, if necessary, to ensure that $\ms{Y} \ra T$ is flat. By the principle of connectedness, this implies that $Y(p,q_t)$ is connected for each point $t\in T$. 

Now, let us fix a point $q \in B\backslash \{p\}$ such that $Y(p,q)$ is connected and consider the $1$-nodal curve $B_{pq}$. Since both $W^{r+1}_d(B)$ and $W^r_{d-2}(B)$ are empty, then by Lemma \ref{wirtcon helper 2}, the schemes $W(p,q)$ and $Y(p,q)$ have homeomorphic underlying topological spaces, hence $W(p,q)$ is connected. As described before the proof, $\wti{W}^r_d(B_{pq})$ is obtained by gluing two copies of $W(p,q)$, which implies that $\wti{W}^r_d(B_{pq})$ is connected. 
\end{proof}

\begin{proof} [Proof of Lemma \ref{wirtcon helper 1}] Say that a smooth curve $B$ of genus $g-1$ has property $\mc{E}$, if $W^{r+1}_d(B) = W^r_{d-2}(B) = \emptyset$. Note that $\mc{E}$ is an open property and depends on the integers $r,d$.
Since $\rho(g,r,d) < \min\{r+1, g+r-d\}$, then both $\rho(g-1, r+1, d)$ and $\rho(g-1,r, d-2)$ are negative. Therefore, by [ACGH, Thm.1.5, p.214], $\mc{E}$ holds for a general smooth curve of genus $g-1$. By [EH, Thm.4.5 and Sec.9], there exists a rational $g$-cuspidal curve $X$ for which $\ol{G}^r_d(X)$ is connected, reduced, and of the expected dimension. It follows that for a general $1$-cuspidal curve $B_{pp}$ of arithmetic genus $g$ the scheme $\ol{G}^r_d(B_{pp})$ is connected, reduced, and of dimension $\rho(g,r,d)$. 

For the remainder of the proof let us fix one such curve $B_{pp}$ that also has property $\mc{E}$. In particular, $W^{r+1}_d(B)=\emptyset$, which implies that  $\ol{W}^r_d(B_{pp}) = \ol{G}^r_d(B_{pp})$. By Section \ref{Presentation schemes}, there is a diagram
$$\xymatrix{ 
P_{B_{pp}}^d \ar[r]^-{\kappa} \ar[d]_-{\lambda} & \bar{J}^d_{B_{pp}} \\
J^d_B,
}$$
where $\kappa$ is proper, birational and bijective (hence, a homeomorphism on the underlying topological spaces). Let $W(p,p)$ be the scheme-theoretic preimage $\kappa^{-1}(\ol{W}^r_d(B_{pp}))$. Since $\kappa$ is birational and bijective and $\ol{W}^r_d(B_{pp})$ is connected, reduced, and of dimension $\rho(g,r,d)$, then $W(p,p)$ is connected, generically reduced, and of dimension $\rho(g,r,d)$. Note that $W(p,p)$ is non-reduced along $\kappa^{-1}(\pl \ol{W}^r_d(B_{pp}))$, see Section \ref{Presentation schemes}. 

The scheme $Y(p,p)$ is a determinantal locus whose expected dimension is $\rho(g,r,d)$, and therefore, $\dim Y(p,p) \ge \rho(g,r,d)$, if $Y(p,p)$ is nonempty. In the paragraph below, we shall show that $Y(p,p)$ is the set-theoretic image of $W(p,p)$ under $\lambda$. This implies that $Y(p,p)$ is connected and of dimension $\rho(g,r,d)$. Moreover, being a determinantal locus of the expected dimension, $Y(p,p)$ is Cohen-Macaulay, [Fu, Thm.14.3c, p.250], and therefore, has no embedded components. Thus, to show that $Y(p,p)$ is reduced, it remains to prove that $Y(p,p)$ is generically reduced. In [EH, Sec.4], it is shown that the $G^r_d$ of a cuspidal curve $X$ is closely related to a certain determinantal locus in a Grassmann bundle over the scheme of linear series of the normalization of $X$. In the case of 1-cuspidal curves this determinantal locus is the scheme $Y(p,p)$. In particular, using [EH, Thm.4.1, p.388 and Remark on p.389], it is easy to see that the morphism $G^r_d(B_{pp}) \ra Y(p,p)$, given by $L\mapsto \nu^*L$, is birational. Since $G^r_d(B_{pp})$ is reduced, this shows that $Y(p,p)$ is generically reduced. 

To complete the proof, let us check that $\lambda(W(p,p))=Y(p,p)$. Let $\mrm{sk}$ be the sky-scraper sheaf on $B_{pp}$ supported at the cusp with fiber $\mb{C}$. If $L \in \ol{W}^r_d(B_{pp})$ and $L$ is invertible, then $\kappa^{-1}(L)$ is a single reduced point $(L \hookrightarrow \nu_*\nu^*L) \in W(p,p)$, whose image in $J^d_B$ is $\nu^*L$. Consider the short exact sequence $0 \ra L \ra \nu_*\nu^*L \ra \mrm{sk} \ra 0$ and the associated long exact sequence
$$\xymatrix{0 \ar[r] & H^0(L) \ar[r] & H^0(\nu^*L) \ar[r]^-\beta & \mb{C} \ar[r] & \cdots}$$
Since $W^{r+1}_d(B) =\emptyset$, then $h^0(\nu^*L)=r+1$ and $\beta$ is the zero map. Therefore, the linear system $|L|$ pulls-back to the \emph{complete} linear system $|\nu^*L|$,    which implies that $h^0(\nu^*(L)(-2p))=r$. Hence, $\nu^*L \in Y(p,p)$. If $L \in \pl \ol{W}^r_d(B_{pp})$, let $M := (\nu^*(L)/torsion)\otimes \mc{O}_B(p)$. In this case we have $L \simeq \nu_*(M(-p))$, [Al, Lem.1.5], and there is a natural presentation $\nu_*(M(-p)) \hookrightarrow \nu_*M$. The fiber $\kappa^{-1}(L)$ is the point $(\nu_*(M(-p)) \hookrightarrow \nu_*M) \in W(p,p)$ with multiplicity 2 (see Section \ref{Presentation schemes}), whose image in $J^d_B$ is $M$. Since $h^0(L)\ge r+1$ and $W^{r+1}_d(B) = \emptyset$, the inclusion $L\simeq \nu_*(M(-p))\hookrightarrow \nu_*M$ induces an isomorphism $H^0(M(-p)) \simeq H^0(M)$. Therefore, $h^0(M)= r+1$ and $h^0(M(-2p)) \ge r$, hence $M \in Y(p,p)$. 
\end{proof}

\begin{proof} [Proof of Lemma \ref{wirtcon helper 2}] Let $\mrm{sk}$ be the sky-scraper sheaf on $B_{pq}$ supported at the node with fiber $\mb{C}$. By Section \ref{Presentation schemes}, the presentation scheme $P_{B_{pq}}^d$ fits into the diagram
$$\xymatrix{ 
P_{B_{pq}}^d \ar[r]^-{\kappa} \ar[d]_-\lambda & \bar{J}^d_{B_{pq}} \\
J^d_B,
}$$
where $\lambda$ is a $\bP^1$-bundle and $\kappa$ is the normalization morphism. To prove the lemma, we shall show that $\lambda$ restricts to a bijective morphism $\bar{\lambda}\colon W(p,q) \ra Y(p,q)$. Since $\bar{\lambda}$ is also proper, this will imply that $\bar{\lambda}$ induces a homeomorphism on the underlying topological spaces.

First, let us check that $\lambda(W(p,q))=Y(p,q)$. Recall from Section \ref{Presentation schemes} that $\lambda$ sends a presentation $(L \hookrightarrow \nu_*M)$ to $M \in J^d_B$. If $L \in \ol{W}^r_d(B_{pq})$ and $L$ is invertible, then $\kappa^{-1}(L)$ is a single reduced point $(L \hookrightarrow \nu_*\nu^*L) \in W(p,q)$. Consider the short exact sequence $0 \ra L \ra \nu_*\nu^*L \ra \mrm{sk} \ra 0$ and the associated long exact sequence
$$\xymatrix{0 \ar[r] & H^0(L) \ar[r] & H^0(\nu^*L) \ar[r]^-\beta & \mb{C} \ar[r] & \cdots}$$
Since $W^{r+1}_d(B) =\emptyset$, then $h^0(\nu^*L)=r+1$ and $\beta$ is the zero map. 
Therefore, the linear system $|L|$ pulls-back to the \emph{complete} linear system $|\nu^*L|$,    which implies that $h^0(\nu^*(L)(-p-q))=r$, see also [Ca09, Rem.2.2.1, p.1397]. Hence, $\nu^*L \in Y(p,q)$.
If $L \in \pl \ol{W}^r_d(B_{pq})$, then $\kappa^{-1}(L)$ consists of two reduced points: $(\nu_*M(-p) \hookrightarrow \nu_*M)$ and $(\nu_*M^\p(-q) \hookrightarrow \nu_*M^\p)$, where $M = (\nu^*(L)/torsion)\otimes \mc{O}_B(p)$ and $M^\p = M\otimes \mc{O}_B(q-p)$, see Section \ref{Presentation schemes}. Note that $L \simeq \nu_*M(-p)\simeq \nu_*M^\p(-q)$, which implies that $h^0(M), h^0(M^\p) \ge r+1$ and $h^0(M(-p-q)), h^0(M^\p(-p-q)) \ge r$. Hence, $M, M^\p \in Y(p,q)$. This shows that $\lambda(W(p,q))=Y(p,q)$.

Second, let us show that $\bar{\lambda}$ is bijective. Since $W^{r+1}_d(B)$ and $W^r_{d-2}(B)$ are empty, then for each $M \in Y(p,q)$, we have $h^0(M) =r+1$ and $h^0(M(-p-q))=r$. As a consequence, there is a \emph{unique} morphism $h\colon \nu_\ast M \thra \mrm{sk}$, which induces the zero map on global sections (if $p$ and $q$ are not base points of $M$, this also follows from [Ca09, Lem.5.1.3(2), p.1420]). The sheaf $L_M :=\ker(h)$ has $h^0(L_M)=r+1$, and therefore, $L_M \in \ol{W}^r_d(B_{pq})$. From the description of $\bar{\lambda}$ given above, we see that the assignment $M\mapsto (L_M \hookrightarrow \nu_*M)$ is a set-theoretic inverse of $\bar{\lambda}$, which shows that $\bar{\lambda}$ is bijective. 
\end{proof}

\begin{remalgeq}\label{remalgeq} \emph{The special subvarieties are not always algebraically equivalent, as is shown in the following example. In the case of a trigonal curve $C$, the special subvarieties $V_0$ and $V_1$ associated to a $g^1_3$ on $C$ are related by $V_1=(-1)^\ast V_0$ (after an appropriate translation) and $P$ is isomorphic to the Jacobian of $V_0$, which has a $g^1_4$, see [Be82, p.360 and p.366; Re]. Since $\rho(g,1,4)=6-g$, then all curves of genus at most $6$ admit a $g^1_4$. It follows from [Ce] and [Re] that $V_0$ and $V_1$ are not algebraically equivalent on a generic ``trigonal" Prym of dimension $3 \le p=g-1 \le 6$. Note that $\rho(g,1,3)=4-g\le 0$, when $p\ge 3$.}
\end{remalgeq}

\section{Classes of special subvarieties in the Chow ring modulo algebraic equivalence.} \label{SecOnClasses}

In this section we assume that $\pi\colon \ti{C} \ra C$ is \'etale and recall that in this case $\dim P= p=g-1$. Let us fix two integers $r$ and $d$ such that $0 < 2r < d < 2g$. Throughout this section the letters $n$ and $m$ will denote $r$-tuples of non-negative integers $(n_1, \ldots, n_r)$ and $(m_1, \ldots, m_r)$, respectively.   As in Section \ref{Special subvarieties}, $\ti{\varphi}_d\colon \ti{C}_d \ra \ti{J}$, $\varphi_d\colon C_d \ra J$ are abelian sum mappings and $\pi_d\colon \ti{C}_d \ra C_d$ is the map induced by $\pi\colon \ti{C} \ra C$. Also, let $|n| := \sum_{j=1}^r n_j$ and $\mu_{n} := \prod_{j=1}^r \frac{(-1)^{n_j-1}}{n_j}$.

For any pair $n,m$ of $r$-tuples with $1 \le n_1 \le \ldots \le n_r$, $\sum_{j=1}^r n_j \le d$, and $ 0\le m_j \le n_j/2$ for all $j$, define the following numbers: 

1. $\nu_{n,m}:$ For each $\ell \ge 1$ let $q(\ell)$ be the number of $n_j$'s that are equal to $\ell$ and suppose that $n_{j_1}=n_{j_2}=\ldots =n_{j_{q(\ell)}}=\ell$. If $q(\ell)$ is not zero, let $p(\ell, n,m)$ be the number of permutations of the ordered $q(\ell)$-tuple $(m_{j_1},m_{j_2},\ldots, m_{j_{q(\ell)}})$, and otherwise let $p(\ell, n,m)=1$. Then we set $$\nu_{n,m}:=\prod_{\ell=1}^{d-r+1}\frac{1}{ p(\ell, n,m)}.$$ Note that if $r=1$, then $\nu_{n,m}=1$ for all $n,m$. If $r=2$, then $\nu_{n,m}=1/2$ if $n_1=n_2$ and $m_1\ne m_2$, and $\nu_{n,m}=1$ otherwise.

2. $\lambda_{n,m}:$ This is the number   $$\lambda_{n,m}:=2^{d-|n|}\cdot \mu_{n}\cdot \nu_{n,m} \cdot \binom{d}{|n|}\binom{n_1}{m_1}\cdots \binom{n_r}{m_r}.$$

3. $d_{n, m}:$ Let $e_1, \ldots, e_k$ count the number of repeats in the sequence of pairs $(n_1, m_1), \ldots, (n_r, m_r)$. For example, if the sequence of pairs is 
$(1,2), (1, 2), (2, 5), (2, 3), (2, 3), (2, 3), (7,5), (3,3), (3, 3), (3,3),$ then the associated sequence of repeats is $2, 1, 3, 1, 3$. Let us define $$d_{n, m} := e_1!e_2!\cdots e_k!.$$

\begin{classes} \label{classes}
Let $0 < 2r < d < 2g$ and let $V = V_0 \cup V_1$ be the union of special subvarieties of $P$ associated to a complete and base point free $g^r_d$ on $C$. The component of the class $[V]$ in $A^{p-r}(P)_{(t)}$ is given by the formula $$[V]_{(t)} = c_{t,r,d}\big([\psi(\ti{C})]^{\ast r}\big)_{(t)}$$
where $$c_{t,r,d}:=2^{-2r-t} \sum_{n}\sum_{m\le \frac{n}{2}} \frac{\lambda_{n,m}}{d_{n,m}} \prod_{j=1}^r(n_j-2m_j)^{t+2} ,$$ 
the outer sum is taken over the choices of $r$-tuples $n=(n_1, \ldots, n_r)$ of integers with $1 \le n_1 \le \ldots \le n_r$ and $\sum_{j=1}^r n_j \le d$, the inner sum is taken over the choices of $r$-tuples $m=(m_1, \ldots, m_r)$ of integers with $ 0\le m_j \le \frac{n_j}{2}$ for all $j$.
\end{classes}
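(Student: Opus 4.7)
The approach is to express $[V]$ explicitly in $A(P)$ as a sum of Pontryagin products of the form $(k_{1\ast}[\psi(\ti{C})])\ast\cdots\ast(k_{r\ast}[\psi(\ti{C})])$ with integer weights $k_j$ coming from combinatorial data on $G_d$, and then to extract the Beauville $(t)$-component using the diagonal action of $k_\ast$ on $A^{p-1}(P)_{(s)}$. Starting from the definition $V = \ti{\varphi}_{d\ast}\pi_d^\ast[G_d]$ (a cycle in $\ti{J}$ supported in $\ker\mrm{Nm}$, projected to $P$ via $u=1-\iota$), I would factor the composition $u\circ\ti{\varphi}_d$ through the addition map $\Phi\colon \ti{C}^d\to P$, $(\ti{x}_1,\ldots,\ti{x}_d)\mapsto \sum_i\psi(\ti{x}_i)$: with $q\colon\ti{C}^d\to\ti{C}_d$ the $S_d$-quotient, $\Phi = u\ti{\varphi}_d\circ q$, so
\begin{equation*}
[V] \;=\; \frac{1}{d!}\,\Phi_\ast\bigl((\pi^{\times d})^\ast q_C^\ast[G_d]\bigr)\in A(P),
\end{equation*}
where $q_C\colon C^d\to C_d$ and $\pi^{\times d}\colon\ti{C}^d\to C^d$ are the natural maps.

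Next, I would expand the pullback of $[G_d]$ by stratifying $G_d$ according to the multiplicity vector $(n_1,\ldots,n_r)$ of divisors (with $\sum n_j\le d$, allowing residual simple points), producing a MacDonald/Newton-type expansion whose coefficients assemble into $\lambda_{n,m}$: the factor $\binom{d}{|n|}$ chooses which slots carry the marked points, $\mu_n=\prod_j(-1)^{n_j-1}/n_j$ comes from a Newton-power structure for a complete base-point-free linear series, $2^{d-|n|}$ handles the residual lifts, $\nu_{n,m}$ corrects orbit overcounts, and $\prod_j\binom{n_j}{m_j}$ arises as the local degree. For a divisor $D=\sum_j n_j x_j$ of this type, the preimage $\pi_d^{-1}(D)$ in $\ti{C}_d$ consists of $\ti{D}_m = \sum_j(m_j\ti{x}_j+(n_j-m_j)\iota\ti{x}_j)$ for $0\le m_j\le n_j$, each entering $\pi_d^\ast[D]$ with multiplicity $\prod_j\binom{n_j}{m_j}$ by an orbit-stabilizer count in $\ti{C}^d\to\ti{C}_d$. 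Since $\iota|_P=-1$, one has $u\ti{\varphi}_d(\ti{D}_m) = \sum_j(2m_j-n_j)\psi(\ti{x}_j)$, so as $(x_j)\in C^r$ varies the image cycle is exactly the pushforward of $[\ti{C}^r]$ under $(\ti{x}_j)\mapsto \sum_j k_j\psi(\ti{x}_j)$ with $k_j=n_j-2m_j$, namely the Pontryagin product $(k_{1\ast}[\psi(\ti{C})])\ast\cdots\ast(k_{r\ast}[\psi(\ti{C})])$. Using the $\iota$-symmetry $(-1)_\ast[\psi(\ti{C})]=[\psi(\ti{C})]$ to pair $m\leftrightarrow n-m$ and restrict to $m_j\le n_j/2$, these contributions assemble into
\begin{equation*}
[V] \;=\; \sum_{n}\sum_{m\le n/2}\frac{\lambda_{n,m}}{d_{n,m}}\,(k_{1\ast}[\psi(\ti{C})])\ast\cdots\ast(k_{r\ast}[\psi(\ti{C})]).
\end{equation*}

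Finally, I would extract the $(t)$-Beauville component: since $[\psi(\ti{C})]\in A^{p-1}(P)$ satisfies $k_\ast[\psi(\ti{C})]_{(s)}=k^{s+2}[\psi(\ti{C})]_{(s)}$, the $(t)$-piece of each Pontryagin product reads
\begin{equation*}
\sum_{s_1+\cdots+s_r=t}\prod_{j=1}^r k_j^{s_j+2}\,[\psi(\ti{C})]_{(s_1)}\ast\cdots\ast[\psi(\ti{C})]_{(s_r)},
\end{equation*}
and summing over $(n,m)$ expresses $[V]_{(t)}$ as an explicit linear combination of the basis cycles $[\psi(\ti{C})]_{(s_1)}\ast\cdots\ast[\psi(\ti{C})]_{(s_r)}$. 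The main obstacle---the combinatorial crux of the theorem---is the identity asserting that the coefficient $\sum_{n,m}(\lambda_{n,m}/d_{n,m})\prod_j k_j^{s_j+2}$ depends only on $t=\sum s_j$ and not on the individual $s_j$, with common value $2^{2r+t}c_{t,r,d}$; invoking $\prod_j k_j^{t+2}=K^{t+2}$ for $K=\prod_j k_j$ then matches the stated formula. This symmetry---that $[V]_{(t)}$ is proportional to the fully symmetric Pontryagin power $([\psi(\ti{C})]^{\ast r})_{(t)}$ rather than an asymmetric combination---reflects the linearity of $G_d\cong\bP^r$ as a complete linear system, and its verification through the precise structure of $\lambda_{n,m}$, $\nu_{n,m}$ and $d_{n,m}$ is the bulk of the technical work.
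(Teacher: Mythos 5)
Your proposal follows the same route as the paper's proof: expand $[G_d]$ over generalized diagonals, pull back along $\pi_d$ to the modified diagonals $\ti{\delta}_{n,m}$, push forward by $\ti{\varphi}_d$ and $u$ to obtain a sum of Pontryagin products $(k_{1\ast}[\psi(\ti{C})])\ast\cdots\ast(k_{r\ast}[\psi(\ti{C})])$ with $k_j=n_j-2m_j$, and then read off Beauville components. Two points need attention. First, the expansion of $[G_d]$ with the alternating coefficients $\mu_n=\prod_j(-1)^{n_j-1}/n_j$ is not something one produces by ``stratifying $G_d$ by multiplicity vectors'': it is precisely Herbaut's theorem [He, Thm.3], the one essential external input of the whole argument, and your sketch gives no indication of how $\mu_n$ would actually be derived. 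The remaining ingredients (the binomials, $2^{d-|n|}$, $\nu_{n,m}$, $d_{n,m}$) are elementary multiplicity counts, and your account of those is correct.

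Second, your bookkeeping of the factor $2^{-2r-t}$ is off. The cycle $V$ already lives in $P\subset\ti{J}$, so what your map $\Phi=u\circ\ti{\varphi}_d\circ q$ computes is $u_\ast[V]=2_\ast[V]$, not $[V]$, because $u$ restricted to $P$ is multiplication by $2$. The paper's intermediate identity is $2_\ast[V]=\sum_{n,m}(\lambda_{n,m}/d_{n,m})\,(k_{1\ast}[\psi(\ti{C})])\ast\cdots\ast(k_{r\ast}[\psi(\ti{C})])$, and the $2^{-2r-t}$ in $c_{t,r,d}$ comes from inverting $2_\ast$ on $A^{p-r}(P)_{(t)}$; it is not part of the combinatorial identity you place at the end. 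Finally, you are right that for $r\ge 2$ the last step requires the coefficient $\sum_{n,m}(\lambda_{n,m}/d_{n,m})\prod_j k_j^{s_j+2}$ of $[\psi(\ti{C})]_{(s_1)}\ast\cdots\ast[\psi(\ti{C})]_{(s_r)}$ to depend only on $t=\sum_j s_j$ and to coincide with $\sum_{n,m}(\lambda_{n,m}/d_{n,m})\prod_j k_j^{t+2}$; this is a genuinely nontrivial identity about Herbaut's coefficients (vacuous only for $r=1$ or $t=0$), and you leave it unproved. To be fair, the paper itself compresses this extraction into a single sentence, so isolating it as the crux is the strongest part of your write-up; but as written, neither your argument nor the one-line conclusion you propose establishes it.
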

\begin{proof} 
Let $G_d$ denote the complete and base-point-free $g^r_d$ on $C$, considered as a subvariety of $C_d$ isomorphic to $\bP^r$.  Given an $r$-tuple $n$ of positive integers, consider the generalized diagonal
$$\delta_{n}=\Big\{ n_1 x_1 + n_2 x_2+\cdots+n_r x_r  \, | \, x_1, \ldots, x_r \in C\Big\}$$
in the $|n|$-fold symmetric product of $C$. 
Let $D \in G_d$ be a fixed effective divisor, whose support consists of $d$ distinct points. In [He, Thm.3, p.888] we may find the following formula for the class $[G_d] \in \mrm{CH}^{d-r}(C_d)$:
$$[G_d] = \sum_{n, \underline{o}_s}\mu_{n}[\delta_{n} + o_1+\cdots+o_s],$$    
where $s = s(n) := d - |n| \ge 0$ and the sum is taken over all $r$-tuples $n$ with $1 \le n_1 \le \ldots \le n_r$ and the choices of (unordered) sums $\underline{o}_s:=o_1+\cdots+o_s$ of pairwise distinct points in the support of the fixed divisor $D$.

In order to compute $\pi_d^\ast [G_d]$, for each pair $n,m$ of $r$-tuples with $m_j \le n_j$ for all $j$, we introduce the \emph{modified generalized diagonals} 
$$\ti{\delta}_{n,m}:=\Big\{\sum_{j=1}^r m_j\iota(\ti{x}_j)+(n_j-m_j)\ti{x}_j \, |\,  \ti{x}_1, \ldots, \ti{x}_r \in \ti{C}\Big\},$$
considered as subvarieties of the $|n|$-fold symmetric product of $\ti{C}$. We may check that 
$$\pi_d^\ast [\delta_{n} + o_1+\cdots+o_s]= \sum_{m, \underline{\ti{u}}_s} \nu_{n,m} \binom{n_1}{m_1}\cdots \binom{n_r}{m_r}\big[\ti{\delta}_{n,m}+\ti{u}_1+\cdots + \ti{u}_s\big], $$
where the sum is taken over all $r$-tuples $m$ with 
$0 \le m_j \le  \frac{n_j}{2}$ for all  $j$ 
and all (unordered) sums $\underline{\ti{u}}_s=\ti{u}_1+ \cdots +\ti{u}_s$ with $\ti{u}_j \in \pi^{-1}(o_j) = \{\ti{o}_j, \iota(\ti{o}_j)\}$. To explain the  number $\nu_{n,m}$, note that two ordered indexing pairs $n,m$ and $n,m^\p$ with $m \ne m^\p$ may label the same generalized diagonal. The number $\nu_{n,m}$ is the adjustment for this redundancy. Passing to $A(\ti{C}_d)$, the formula for the pull-back of a generalized diagonal becomes 
$$\pi_d^\ast [\delta_{n} + o_1+\cdots+o_s]= \sum_{m \le \frac{n}{2}} 2^{d-|n|}\nu_{n,m} \binom{n_1}{m_1}\cdots \binom{n_r}{m_r}\big[\ti{\delta}_{n,m}+(d-|n|)\ti{o}\big]$$
and we get:
\begin{equation}\label{pidGd}
\pi_d^\ast [G_d] = \sum_{n} \sum_{m \le \frac{n}{2}} \lambda_{n,m} \big[\ti{\delta}_{n,m}+(d-|n|)\ti{o}\big].
\end{equation}

The abelian sum mapping $\ti{\varphi}_d$ maps the subvariety $\ti{\delta}_{n,m}+(d-|n|)\ti{o}$ of $\ti{C}_d$ bijectively onto a translate of the variety $(m_1\iota+n_1-m_1)(\ti{C})+\cdots+ (m_r\iota+n_r-m_r)(\ti{C}),$
where $\ti{C}\subset \ti{J}$ also denotes the Abel curve and $(m_j\iota+n_j-m_j)$ is viewed as an endomorphism of $\ti{J}$. The number $d_{n, m}$ computes the degree of the addition morphism 
$$(m_1\iota+n_1-m_1)(\ti{C}) \times \cdots \times (m_r\iota+n_r-m_r)(\ti{C}) \ra \sum_{j=1}^r(m_j\iota+n_j-m_j)(\ti{C}),$$
and therefore, by the definition of Pontryagin product we have:
$$(m_1\iota+n_1-m_1)_\ast[\ti{C}] \ast \cdots \ast (m_r\iota+n_r-m_r)_\ast[\ti{C}]=d_{n,m}\Big[\sum_{j=1}^r(m_j\iota+n_j-m_j)(\ti{C})\Big],$$
which gives the formula:
\begin{equation} \label{uPhiTiDelta}
u_\ast \ti{\varphi}_{d\ast} [\ti{\delta}_{n,m}+(d-|n|)\ti{o}]=
\frac{1}{d_{n,m}}(n_1-2m_1)_\ast [\psi(\ti{C})] \ast \cdots \ast (n_r-2m_r)_\ast [\psi(\ti{C})].
\end{equation} 
Since the composition of $P \hookrightarrow \ti{J}$ with  $u\colon \ti{J}\ra P$ is multiplication by 2, then by (\ref{pidGd}) and (\ref{uPhiTiDelta}) we have the following identity in $A(P)$: 
$$2_\ast [V] =  \sum_{n}\sum_{m\le \frac{n}{2}} \frac{\lambda_{n, m}}{d_{n,m}}(n_1-2m_1)_\ast [\psi(\ti{C})] \ast \cdots \ast (n_r-2m_r)_\ast [\psi(\ti{C})].$$
We may extract the formulas for the homogeneous components of $[V]$ by recalling that $k_\ast x = k^{2(p-l)+t}x$ for $x \in A^l(P)_{(t)}$.
\end{proof} 

In general, there is no canonical way of distinguishing $V_0$ from $V_1$, and consequently, we may not extract the formulas for $[V_0]$ and $[V_1]$ from the formula for $[V]$ in a direct way. We do not know in general if $[V_i] \in \ms{T}(P, \psi(\ti{C}))$. However, in the examples of Remark \ref{remalgeq}, $[V_i]_{(1)} \ne 0$, which implies that $[V_i] \notin \ms{T}(P, \psi(\ti{C}))$, because by Theorem \ref{prymgens}, $\ms{T}(P, \psi(\ti{C}))$ is generated by elements of even Beauville degree. When $V_0$ and $V_1$ are algebraically equivalent, $[V_0]=[V_1]$ in $A(P)$,  and hence, $[V_0]=[V_1] = \frac{1}{2}[V]\in \ms{T}(P, \psi(\ti{C}))$. Also, note that even when $c_{t,r,d} \ne 0$, the class $([\psi(\ti{C})]^{*r})_{(t)}$ may itself be zero, e.g., when $t$ is odd and $r=1$, see the proof of Theorem \ref{prymgens}.

\subsection*{Example 1} \label{Example1}
When $G_d$ is a $g^1_d$ (the case $r=1$), the formula in Theorem \ref{classes} reads
\begin{equation} \label{g1d}
[V]_{(t)} =\sum_{n=1}^d\sum_{m \le \frac{n}{2}} \frac{(-1)^{n-1}2^{d-n-t-2}}{n}\binom{d}{n}\binom{n}{m} (n-2m)^{2+t}[\psi(\ti{C})]_{(t)}.
\end{equation}
In particular,  we may check that $$[V]_{(0)}=2^{d-3}[\psi(\ti{C})]_{(0)}=2^{d-2}\cdot \frac{\xi^{g-2}}{(g-2)!},$$ which corresponds to the singular cohomology class of $V$ and agrees with the formula in [Be82, Thm.1, p.364]. 

When $t=2s \ge 0$ is even, using the package {\tt ekhad} from [PWZ] for the software system {\tt maple} and the resource [OEIS], it appears that 
$$c_{2s,1,d}=\frac{(4^{s+1}-1)B_{2s+2}}{s+1}\cdot2^{d-2},$$  
where $B_m$ is the $m^{\mrm{th}}$ Bernoulli number, defined by $\frac{t}{e^t-1} = \sum_{m=0}^\infty B_m\frac{t^m}{m!}.$ In particular, the coefficient $c_{t,1,d}$ is non-zero when $t$ is even. When $t$ is odd,  a closed formula for $c_{t,1,d}$ can also be found using the package {\tt ekhad}, but the formula we obtained was very bulky, and therefore, we do not include it here. In any case, when $t$ is odd, the class $[\psi(\ti{C})]_{(t)}$ is zero.

\subsection*{Example 2}  When $G_d$ is a $g^3_7$, we may check that 
$$\begin{array}{ll}
2_\ast [V] =& \frac{821}{6}Z^{\ast 3} - 84 Z^{\ast 2} \ast 2_\ast Z + \frac{89}{6}Z^{\ast 2} \ast 3_\ast Z-\frac{7}{4}Z^{\ast 2} \ast 4_\ast Z + \frac{1}{10}Z^{\ast 2} \ast 5_\ast Z\\ 
&\\
&+  \frac{89}{8}Z \ast (2_\ast Z)^{\ast 2} - \frac{7}{3}Z\ast 2_\ast Z \ast 3_\ast Z +\frac{1}{8}Z\ast 2_\ast Z \ast 4_\ast Z +  \frac{1}{18}Z \ast (3_\ast Z)^{\ast 2}\\ 
&\\
&- \frac{7}{24} (2_\ast Z)^{\ast 3} + \frac{1}{24}(2_\ast Z)^{\ast 2}\ast 3_\ast Z,
\end{array}$$
where $Z = [\psi(\ti{C})]$. Using the identification $[\psi(\ti{C})]_{(0)}=2\cdot \frac{\xi^{g-2}}{(g-2)!}$ and [BL, Cor.16.5.8, p.538], an elementary calculation shows that $[V]_{(0)}=2\cdot \frac{\xi^{g-4}}{(g-4)!}$, which agrees with the formula in [Be82].\\

Using Remark \ref{RemNotPolynInXi}, we may deduce the following non-vanishing results.
From the first example above we know that $c_{2, 1, d} = -2^{d-4}$, which implies that $[V]_{(2)} \ne 0$, where $V$ is associated to a $g^1_d$ on a generic curve $C$ of genus $g\ge 6$.     

When $r=2$, we have verified on a computer that $c_{2, 2, d} = 2^{d-7}$ for $0 \le d \le 100$. Therefore, $[V]_{(2)} \ne 0$ at least for $4 \le d \le 100$, where $V$ is associated to a $g^2_d$ on a generic curve $C$ of genus $g \ge 6$.  

When $r = 3$, the coefficients $c_{2,r,d}$ are not always integers and do not seem to follow an obvious pattern, but they still appear to be non-zero, which has been checked on a computer for $3 \le d \le 50$. 

\section*{Acknowledgements} I would like to thank my thesis advisor Elham Izadi for suggesting the questions addressed in this article and for her generous help and support. I would like to thank Robert Varley for sharing his insights during many stimulating discussions on the topic of this article and on algebraic curves and abelian varieties in general. Also, I would like to thank Valery Alexeev, Silvia Brannetti, and Lucia Caporaso for answering my questions on Brill-Noether theory for a nodal curve. Finally, I would like to thank the referee for suggestions towards the improvement of the article.

\end{document}